\documentclass[11pt,a4paper]{article}
\usepackage[pdftex]{graphicx}
\usepackage{amsmath,amssymb,amsfonts,amsthm}
\numberwithin{equation}{section}
\usepackage{indentfirst}
\usepackage{enumitem} 
\usepackage[margin=1.3in]{geometry}
\usepackage{fancyhdr}

\usepackage[colorlinks=true,linkcolor=magenta,citecolor=blue,urlcolor=cyan]{hyperref}
\usepackage{titlesec}
\usepackage{etoolbox}
\usepackage{graphicx}  
\usepackage{changepage}
\theoremstyle{plain}
\newtheorem{theorem}{Theorem}[section]
\newtheorem{lemma}[theorem]{Lemma}

\newtheorem{remark}{Remark}[section]

\makeatletter
\renewcommand{\maketitle}{
	\begin{center}
		{\Large\bfseries{\@title}\par}
		\vskip 1em
		{\normalsize
			\lineskip .5em
			\begin{tabular}[t]{c}
				\@author
			\end{tabular}\par}
		\vskip 1.5em
	\end{center}
}
\makeatother
\renewenvironment{abstract}{
	\begin{adjustwidth}{1.3cm}{1.3cm}
		\noindent{\large\bfseries{A{\scriptsize BSTRACT.}}}
	}{
	\end{adjustwidth}
}

\usepackage{color}
\usepackage{xcolor}
\usepackage[normalem]{ulem} 
\usepackage{soul}

\usepackage{graphicx} 
\usepackage{xstring}  

\usepackage{xstring}
\usepackage{graphicx} 

\usepackage{marvosym}

\begin{document}
	
	\title{Generalization of Ramanujan's Continued Fractions for Even Order}

	\author{Dipika Sarkar, S. N. Fathima, and M. P. Thejitha}
	
	\maketitle
	
	\begin{abstract}
		In this paper, we derive three generalized continued fractions of any even order $k$ with the aid of a general continued fraction identity of Ramanujan and we establish general theta function identities for these continued fractions. As an application of continued fraction of order seventy-six, we obtain partition theoretic identities and some vanishing coefficient results. 
		\\
		
		\noindent {\bf \small Keywords:} $q$-continued fractions, theta function identities, integer partitions, colored partitions, vanishing coefficients.\\
		
		\noindent {\bf \small Mathematics Subject Classification (2020):} 05A17, 11P83 
	\end{abstract}
	
	\bigskip
	
	\vspace{0.5em}
	\section{Introduction} 
	
	For any complex numbers $\lambda$ and $q$, define the $q$-product $(\lambda; q)_\infty$ as
	\begin{equation*}
		(\lambda; q)_\infty := \prod_{n=0}^{\infty} \left(1 - \lambda q^n \right), \quad |q| < 1.
	\end{equation*}
		For brevity, we often write
	\[
	(\lambda_1; q)_\infty (\lambda_2; q)_\infty (\lambda_3; q)_\infty \cdots (\lambda_m; q)_\infty
	= (\lambda_1, \lambda_2, \lambda_3, \cdots, \lambda_m; q)_\infty.
	\]
	\indent Ramanujan was a pioneer in the field of $q$-continued fractions and he recorded several continued fraction identities in his notebooks. The most famous among them 
	is the Rogers--Ramanujan continued fraction $R(q)$ defined by
	\begin{equation}
		R(q) := q^{1/5}\frac{(q,q^{4};q^{5})_{\infty}}{(q^{2},q^{3};q^{5})_{\infty}}
		= q^{1/5}\frac{f(-q,-q^{4})}{f(-q^{2},-q^{3})}
		= \cfrac{q^{1/5}}{1+\cfrac{q}{1+\cfrac{q^{2}}{1+\cfrac{q^{3}}{1+\cdots}}}},
		\qquad |q|<1.
		\label{a1.1}
	\end{equation}
	The Rogers--Ramanujan continued fraction $R(q)$ is often referred to as the 
	continued fraction of order five. In continuation to his work, he obtained some theta-function identities and modular relations for the continued fraction $R(q)$, see \cite{Berndt}. Ramanujan also recorded the following general continued fraction 
	identity \cite[p.~24, Entry~12]{Berndt}:
	Suppose that $a$, $b$, and $q$ are complex numbers with $|ab|<1$, 
	or for some integer $m$, $a=bq^{2m+1}$. Then	
	\begin{equation}
		\frac{(a^{2}q^{3};q^{4})_{\infty}(b^{2}q^{3};q^{4})_{\infty}}
		{(a^{2}q;q^{4})_{\infty}(b^{2}q;q^{4})_{\infty}}
		=
		\cfrac{1}{1-ab+
			\cfrac{(a-bq)(b-aq)}
			{(1-ab)(q^{2}+1)+
				\cfrac{(a-bq^{3})(b-aq^{3})}
				{(1-ab)(q^{4}+1)+\cdots}}
		}.
		\label{a1.2}
	\end{equation}
	For specific values of $a$ and $b$, and suitable powers of $q$, 
	one can obtain $q$-continued fractions of particular order which satisfy 
	theta-function identities analogous to those of $R(q)$. The main purpose of this paper is to establish the generalized form of $q$-continued fractions of any even order $k$ and derive theta-function identities analogous to those of $R(q)$.\\
	\indent We now replace $q$ by $q^{\frac{k}{4}}$ and then we set $(a, b)= {\left(q^{2i-1}, q^{\frac{k}{4}-(2i-1)} \right)}$ in \eqref{a1.1}, where $ i=\frac{m}{2}$ and $m\in\mathbb{N}$, and using
	\begin{align*}
		\left(q^{\frac{5k}{4} - (4i - 2)}; q^{k}\right)_{\infty}
		=
		\frac{\left
			(q^{\frac{k}{4} - (4i - 2)}; q^{k}\right)_{\infty}
		}{\left(1 - q^{\frac{k}{4} - (4i - 2)}\right)},
	\end{align*}
	in the resulting identity to obtain the following continued fraction ${S_n(q)}$ of order ${k}$:
		\begin{align}
			S_n(q) 
			:&= q^{(2i - 1)}
			\frac{
				\left(q^{\frac{k}{4} - (4i - 2)},\; q^{\frac{3k}{4} + (4i - 2)}; q^{k}\right)_{\infty}
			}{
				\left(q^{\frac{k}{4} + (4i - 2)},\; q^{\frac{3k}{4} - (4i - 2)}; q^{k}\right)_{\infty}
			} 
			= q^{(2i-1)}
			\frac{
				f\left(-q^{\frac{k}{4}-(4i-2)}, -q^{\frac{3k}{4}+(4i-2)}\right)
			}{f \left(-q^{\frac{k}{4}+(4i-2)}, -q^{\frac{3k}{4}-(4i-2)}\right)} \notag \\[6pt]
			&=
			\cfrac{q^{(2i-1)}\left(1 - q^{\frac{k}{4}-(4i-2)}\right)}
			{\left(1 - q^{\frac{k}{4}}\right)
				+ \cfrac{
					q^{\frac{k}{4}}\left(1 - q^{4i-2}\right)\left(1 - q^{\frac{k}{2}-(4i-2)}\right)
				}{
					\left(1 - q^{\frac{k}{4}}\right)\left(1 + q^{\frac{k}{2}}\right)
					+ \cfrac{
						q^{\frac{k}{4}}\left(1 - q^{\frac{k}{2}+(4i-2)}\right)\left(1 - q^{k-(4i-2)}\right)
					}{
						\left(1 - q^{\frac{k}{4}}\right)\left(1 + q^{k}\right) + \cdots
					}
				}
			}.\label{a1.3}
			\end{align}
	Similarly, we replace $q$ by $q^{\frac{k}{4}}$ and then we set $(a, b)= {\left(q^{\frac{(2i-1)}{2}}, q^{\frac{k}{4}-\frac{(2i-1)}{2}}\right)}$ in \eqref{a1.1}, where $ i=\frac{m}{2}$ and $m\in\mathbb{N}$, and using
	\begin{align*}
		\left
		(q^{\frac{5k}{4}-(2i-1)}; q^{k}\right)_{\infty}
		=
		\frac{\left(q^{\frac{k}{4} - (2i - 1)}; q^{k}\right)_{\infty}
		}{\left(1 - q^{\frac{k}{4} - (2i - 1)}\right)},
	\end{align*}
	in the resulting identity to obtain the following continued fraction ${T_n(q)}$ of order ${k}$:
		\begin{align*}
			T_n(q) 
			:&= q^{\left(\frac{2i - 1}{2}\right)}
			\frac{
				\left(q^{\frac{k}{4} - (2i - 1)},\; q^{\frac{3k}{4} + (2i - 1)}; q^{k}\right)_{\infty}
			}{
				\left(q^{\frac{k}{4} + (2i - 1)},\; q^{\frac{3k}{4} - (2i - 1)}; q^{k}\right)_{\infty}
			} 
			= q^{\left(\frac{2i - 1}{2}\right)}
			\frac{
				f\left(-q^{\frac{k}{4}-(2i-1)}, -q^{\frac{3k}{4}+(2i-1)}\right)
			}{
				f\left(-q^{\frac{k}{4}+(2i-1)}, -q^{\frac{3k}{4}-(2i-1)}\right)
			} 
			\end{align*}
			\begin{align}
			&=
			\cfrac{q^{\left(\frac{2i - 1}{2}\right)}\left(1 - q^{\frac{k}{4}-(2i-1)}\right)}
			{\left(1 - q^{\frac{k}{4}}\right)
				+ \cfrac{
					q^{\frac{k}{4}}\left(1 - q^{2i-1}\right)\left(1 - q^{\frac{k}{2}-(2i-1)}\right)
				}{
					\left(1 - q^{\frac{k}{4}}\right)\left(1 + q^{\frac{k}{2}}\right)
					+ \cfrac{
						q^{\frac{k}{4}}\left(1 - q^{\frac{k}{2}+(2i-1)}\right)\left(1 - q^{k-(2i-1)}\right)
					}{
						\left(1 - q^{\frac{k}{4}}\right)\left(1 + q^{k}\right) + \cdots
					}
				}
			}.
			\label{a1.4}
		\end{align}
	Again, we replace $q$ by $q^{\frac{k}{4}}$ and then we set  $(a, b)= {\left(q^{\frac{(2i-1)}{4}}, q^{\frac{k}{4}-\frac{(2i-1)}{4}}\right)}$ in \eqref{a1.1}, where $ i=\frac{m}{2}$ and $m\in\mathbb{N}$, and using
	\begin{align*}
		\left(q^{\frac{5k-(4i-2)}{4}}; q^{k}\right)_{\infty}
		=
		\frac{\left(q^{\frac{k-(4i - 2)}{4}}; q^{k}\right)_{\infty}}
		{\left(1 - q^{\frac{k-(4i - 2)}{4}}\right)},
	\end{align*}
in the resulting identity to obtain the following continued fraction ${U_n(q)}$ of order ${k}$:
		\begin{align}
			U_n(q) 
			:&= q^{\left(\frac{2i - 1}{4}\right)}
			\frac{
				\left(q^{\frac{k - (4i - 2)}{4}},\; q^{\frac{3k + (4i - 2)}{4}}; q^{k}\right)_{\infty}
			}{
				\left(q^{\frac{k + (4i - 2)}{4}},\; q^{\frac{3k - (4i - 2)}{4}}; q^{k}\right)_{\infty}
			}
			= q^{\left(\frac{2i - 1}{4}\right)}
			\frac{
				f\left(-q^{\frac{k - (4i - 2)}{4}}, -q^{\frac{3k + (4i - 2)}{4}}\right)
			}{
				f\left(-q^{\frac{k + (4i - 2)}{4}}, -q^{\frac{3k - (4i - 2)}{4}}\right)
			} \notag \\[6pt]
			&=
			\cfrac{q^{\left(\frac{2i-1}{4}\right)}\left(1 - q^{\frac{k-(4i-2)}{4}}\right)}
			{(1 - q^{\frac{k}{4}})
				+ \cfrac{
					q^{\frac{k}{4}}\left(1 - q^{\frac{(2i-1)}{2}}\right)(1 - q^{\frac{k-(2i-1)}{2}})}{
					\left(1 - q^{\frac{k}{4}}\right)\left(1 + q^{\frac{k}{2}}\right)
					+ \cfrac{q^{\frac{k}{4}}\left(1 - q^{\frac{k+(2i-1)}{2}}\right)\left(1 - q^{k-{\frac{(2i-1)}{2}}}\right)}{\left(1 - q^{\frac{k}{4}}\right)\left(1 + q^{k}\right) + \cdots}
				}
			}.
			\label{a1.5}
		\end{align}	

	\indent In this paper, we establish generalised formulas of theta-function for the continued fractions $S_n(q)$, $T_n(q)$ and $U_n(q)$. The following are our main results.
	
	\begin{theorem}\label{at1.1}
		Let $S_n(q)$ be defined as in \eqref{a1.3}. We have
		\begin{align}
			\frac{1}{S_n(q)} - S_n(q)
			&= \frac{\phi(q^{\frac{k}{4}})\, f\left(-q^{(4i-2)}, -q^{\frac{k}{2}-(4i-2)}\right)}
			{q^{(2i-1)}\psi(q^{\frac{k}{2}})\, f\left(-q^{\frac{k}{4}-(4i-2)},-q^{{\frac{k}{4}+(4i-2)}}\right)}\label{a1.6}\\
			\frac{1}{S_n(q)} + S_n(q)
			&= \frac{\phi(-q^{\frac{k}{4}})\, f\left(q^{(4i-2)}, q^{\frac{k}{2}-(4i-2)}\right)}
			{q^{(2i-1)}\psi(q^{\frac{k}{2}})\, f\left(-q^{\frac{k}{4}-(4i-2)},-q^{{\frac{k}{4}+(4i-2)}}\right)}.\label{a1.7}	
		\end{align}
	\end{theorem}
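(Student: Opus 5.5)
Set $Q=q^{k/4}$ and $x=q^{4i-2}$. Writing $f(a,b)$ for Ramanujan's general theta function, the definition \eqref{a1.3} becomes
\[
S_n(q)=x^{1/2}\,\frac{f(-Q/x,-Q^3x)}{f(-Qx,-Q^3/x)},
\]
where each $f$ is a theta function with base $Q^4=q^k$. Put $A=f(-Q/x,-Q^3x)$ and $B=f(-Qx,-Q^3/x)$. Then
\[
\frac{1}{S_n(q)}\mp S_n(q)=\frac{B^2\mp xA^2}{x^{1/2}AB}.
\]
The plan is to compute the numerator $B^2\mp xA^2$ and the denominator $AB$ separately as products, and then take the quotient.

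\emph{Step 1 (denominator).} Expand $AB$ in infinite products:
\[
AB=(Q/x,\,Q^3x,\,Qx,\,Q^3/x,\,Q^4,\,Q^4;Q^4)_\infty .
\]
Combining $(Q/x,Qx;Q^4)_\infty(Q^3/x,Q^3x;Q^4)_\infty=(Q/x,Qx;Q^2)_\infty$, this equals
\[
f(-Q/x,-Qx)\,\frac{(Q^4;Q^4)_\infty^2}{(Q^2;Q^2)_\infty}.
\]
Here $f(-Q/x,-Qx)=f(-q^{k/4-(4i-2)},-q^{k/4+(4i-2)})$ is a theta function with base $Q^2$, which is exactly the $f$ in the denominators of \eqref{a1.6} and \eqref{a1.7}. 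Note also that $(Q^4;Q^4)^2_\infty/(Q^2;Q^2)_\infty=\psi(Q^2)=\psi(q^{k/2})$.

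\emph{Step 2 (numerator).} The key claim is the pair of identities
\[
B^2-xA^2=\phi(Q)\,f(-x,-Q^2/x)\,\frac{\psi(Q^2)^2}{\psi(Q^2)}\cdot c,
\]
or more precisely, after the factors are matched,
\[
B^2\mp xA^2=\phi(\pm Q)\,f(\mp x,\mp Q^2/x)\,\frac{(Q^4;Q^4)_\infty^2}{(Q^2;Q^2)_\infty}.
\]
The proof follows the standard route: both sides are theta functions in $x$ of the same quasi-periodicity. Under $x\to Q^4x$ we have $f(-Qx,-Q^3/x)\to$ (factor) $f(\cdot)$ by $f(a,b)=a\,f(a^{-1},a^2b)$ type relations, and both sides transform with multiplier $x^{-2}Q^{-4}$ (up to sign). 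One also checks the $x\to Q^2/x$ symmetry. A space of such theta functions of degree $2$ is $2$-dimensional, so it suffices to match zeros or two coefficients.

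A cleaner alternative is to apply the Schröter-type product formula (Entry 29 in Berndt's Part III),
\[
f(a,b)f(c,d)\pm f(-a,-b)f(-c,-d)=2f(\ldots)f(\ldots)
\]
with $ab=cd$, or Entry 30. Alternatively, write $B^2\mp xA^2$ via the quintuple-free identity
\[
f(a,b)^2=f(a^2 b^0\ldots)
\]
obtained by Entry 30(ii),(iii): $f(a,b)^2$ splits into even and odd parts $f(a^2,b^2)\phi$-type terms. I would first try Entry 30: it gives $f(a,b)f(c,d)$ as a sum of two products with base doubled. Applying it to $B^2$ and $A^2$ (with $a=-Qx$, $b=-Q^3/x$), the cross terms cancel in $B^2\mp xA^2$, leaving a single product which should be recognized as $\phi(\pm Q)f(\mp x,\mp Q^2/x)$ times the normalizing constant.

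\emph{Step 3.} Dividing the result of Step 2 by $x^{1/2}AB$ from Step 1 cancels the factor $\psi(q^{k/2})$ correctly and yields \eqref{a1.6} and \eqref{a1.7}. The main obstacle is Step 2, namely choosing parameters in the product formula so that the sign change $\mp$ produces exactly $\phi(\pm q^{k/4})$ and $f(\mp q^{4i-2},\mp q^{k/2-(4i-2)})$. If the direct application fails, the fallback is the zero-counting argument: verify that both sides vanish at $x=\pm Q^{2j}$ (respectively the appropriate points), and then compare the constant term at $x\to$ a convenient specialization, for example $x=Q$, where $A=B$.
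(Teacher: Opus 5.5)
Your Step~1 is correct and is the paper's \eqref{a3.7}: $AB=f(-Q/x,-Qx)\,\psi(Q^2)$. The gap is in Step~2. The identity you assert there, $B^2\mp xA^2=\phi(\pm Q)\,f(\mp x,\mp Q^2/x)\,(Q^4;Q^4)_\infty^2/(Q^2;Q^2)_\infty$, is false. The extra factor is exactly $\psi(Q^2)$, and the true identity has no normalizing factor:
\[
B^2\mp xA^2=\phi(\pm Q)\,f(\mp x,\mp Q^2/x).
\]
Your own proposed test point shows this. At $x=Q$ one has $A=f(-1,-Q^4)=0$, not $A=B$; the equality $A=B$ occurs at $x=1$, where both sides of the minus case vanish and give no information. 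So at $x=Q$ the minus case reduces to $\phi^2(-Q^2)=\phi(Q)\phi(-Q)\cdot c$, which forces $c=1$.

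Consequently Step~3 does not follow either. If $\psi(q^{k/2})$ cancelled between numerator and denominator, the final formula would contain no $\psi(q^{k/2})$. But \eqref{a1.6} and \eqref{a1.7} keep it in the denominator, and it comes only from $AB$. Apart from the wrong constant, Step~2 is never actually proved: you list dimension counting, Schr\"oter's formula and Entry~30 as options without choosing parameters or fixing a normalization. (Also, the multiplier under $x\to Q^4x$ is $Q^{-2}x^{-2}$, not $Q^{-4}x^{-2}$.)

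The missing idea, which is how the paper argues, is to factor $B^2-xA^2=(B-x^{1/2}A)(B+x^{1/2}A)$ and recognize each factor as a single theta function. Lemma~\ref{al2.1} with $(a,b)=(\mp x^{1/2},\pm Qx^{-1/2})$ gives $B\mp x^{1/2}A=f(\mp x^{1/2},\pm Qx^{-1/2})$. Then \eqref{a2.7} yields $B^2-xA^2=f(-x,-Q^2/x)\,\phi(Q)$ at once. For the plus sign, write $B^2+xA^2=(B+x^{1/2}A)^2-2x^{1/2}AB$. Lemma~\ref{al2.3} gives $(B+x^{1/2}A)^2=f(x,Q^2/x)\,\phi(-Q)+2x^{1/2}f(-Q/x,-Qx)\,\psi(Q^2)$, and by Step~1 the last term is exactly $2x^{1/2}AB$.

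Your sketch of squaring $A$ and $B$ separately can also be completed, but the terms regroup rather than cancel. Lemma~\ref{al2.3} applied to $A$ and $B$ gives $B^2\mp xA^2=\bigl(\phi(Q^4)\pm 2Q\psi(Q^8)\bigr)\bigl(f(Q^2x^2,Q^6/x^2)\mp x\,f(Q^2/x^2,Q^6x^2)\bigr)$. The first factor is $\phi(\pm Q)$ by the $2$-dissection of $\phi$, and the second is $f(\mp x,\mp Q^2/x)$ by Lemma~\ref{al2.1}. Again no factor $\psi(Q^2)$ appears in the numerator.
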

	
	\begin{theorem}\label{at1.2}
		Let $T_n(q)$ be defined as in \eqref{a1.4}. We have
		\begin{align}
		\frac{1}{T_n(q)} - T_n(q)
		&= \frac{\phi(q^{\frac{k}{4}})\, f\left(-q^{(2i-1)}, -q^{\frac{k}{2}-(2i-1)}\right)}
		{q^{\left(\frac{2i-1}{2}\right)}\psi(q^{\frac{k}{2}})\, f\left(-q^{\frac{k}{4}-(2i-1)},-q^{\frac{k}{4}+(2i-1)}\right)}\label{a1.8}\\
		\frac{1}{T_n(q)} + T_n(q)
		&= \frac{\phi(-q^{\frac{k}{4}})\, f\left(q^{(2i-1)}, q^{\frac{k}{2}-(2i-1)}\right)}
		{q^{\left(\frac{2i-1}{2}\right)}\psi(q^{\frac{k}{2}})\, f\left(-q^{\frac{k}{4}-(2i-1)},-q^{\frac{k}{4}+(2i-1)}\right)}.\label{a1.9}
		\end{align}
	\end{theorem}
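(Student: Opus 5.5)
Theorem~\ref{at1.2} is Theorem~\ref{at1.1} with $4i-2$ replaced by $2i-1$ (and $q^{2i-1}$ by $q^{(2i-1)/2}$). So I would prove a single parametric identity and specialise it. Put $Q=q^{k/4}$ and write $q^{2i-1}=x^{1/2}$, where $x=q^{4i-2}$ for $T_n$ and $x=q^{2(2i-1)}$ for $S_n$. In both cases the continued fraction has the form
\[
X:=x^{1/4}\frac{(Qx^{-1},Q^3x;Q^4)_\infty}{(Qx,Q^3x^{-1};Q^4)_\infty},
\]
using the Jacobi triple product representation $f(a,b)=(-a,-b,ab;ab)_\infty$ already used in \eqref{a1.3}--\eqref{a1.4}. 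For $T_n$ the relevant substitution is $x^{1/4}=q^{(2i-1)/2}$ and $x=q^{2i-1}$, so $X=T_n(q)$.

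The core step is the identity
\[
\frac1X\mp X=\frac{(Qx,Q^3x^{-1};Q^4)_\infty^2\mp x^{1/2}(Qx^{-1},Q^3x;Q^4)_\infty^2}{x^{1/4}(Qx^{-1},Q^3x,Qx,Q^3x^{-1};Q^4)_\infty}.
\]
The denominator simplifies as follows. Merging the four products gives $(Q^2x^2,Q^6x^{-2};Q^8)_\infty$-type factors. Equivalently, the denominator equals $f(-Q x^{-1},-Q^3 x)f(-Qx,-Q^3x^{-1})/(Q^4;Q^4)^2_\infty$, which I would rewrite as $f(-Qx^{-1},-Qx)$ times $\psi$-type factors. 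These are the factors $\psi(Q^2)=\psi(q^{k/2})$ and $f(-q^{k/4}-\cdot,-q^{k/4}+\cdot)$ appearing in \eqref{a1.8}.

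For the numerator, I would expand both squares with the triple product. Then I would apply the standard quintuple/product-of-two-theta addition formula (Entry 29 of Chapter 16 in \cite{Berndt}):
\[
f(a,b)f(c,d)+f(-a,-b)f(-c,-d)=2f(ac,bd)f(ad,bc),
\]
together with its difference form when $ab=cd$. Take $a=-Qx$, $b=-Q^3x^{-1}$, $c=Qx^{-1}$, $d=Q^3x$ after an index shift, so that $x^{1/2}f(\cdot)^2$ matches the odd part. The result should be $\phi(\pm Q)f(\mp x^{1/2},\mp Q^2x^{-1/2})$ times a constant power of $(Q^4;Q^4)_\infty$. That is exactly $\phi(\pm q^{k/4})f(\mp q^{2i-1},\mp q^{k/2-(2i-1)})$.

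The cleanest route to this is to regard both sides as functions of $z=x^{1/2}$. Both are theta functions of the same quasi-periodicity under $z\mapsto Q^4z$ (or $z\mapsto Q^2 z$), with matching zeros at $z=\pm Q^{2n}$-type points. So their ratio is an elliptic function without poles, hence constant. The constant can be fixed at a convenient value, such as $z=1$, where $X=1$ and $1/X-X=0=f(-1,\cdot)$; this checks the minus sign. For the plus sign I would compare at $z\to$ a special point or compare constant terms in $Q$, which yields $\phi(-Q)/\psi(Q^2)$ through $\phi(-Q)=(Q;Q)_\infty/(-Q;Q)_\infty$.

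The main obstacle is bookkeeping. One needs exactly the right dissection identity and must make the powers of $(Q^2;Q^2)_\infty,(Q^4;Q^4)_\infty$ combine into $\phi(\pm Q)/\psi(Q^2)$. I would do the first attempt via Entry 29, and verify the constant by comparing the lowest-order terms in $Q$.
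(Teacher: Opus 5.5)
Your reduction to a single parametric identity is the right idea; it is how the paper treats Theorem~\ref{at1.2} as a copy of Theorem~\ref{at1.1}. But both the parametrization and the key numerator step fail as written.

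With $Q=q^{k/4}$ and $x=q^{2i-1}$ one has $T_n(q)=x^{1/2}(Q/x,Q^3x;Q^4)_\infty/(Qx,Q^3/x;Q^4)_\infty$. The prefactor is $x^{1/2}$, not $x^{1/4}$. Your requirements ``$x^{1/4}=q^{(2i-1)/2}$'' and ``$x=q^{2i-1}$'' are incompatible, and since $q^{4i-2}=q^{2(2i-1)}$ you assign the same $x$ to $S_n$ and $T_n$. Write $B=f(-Qx,-Q^3/x)$ and $A=f(-Q/x,-Q^3x)$. Then $1/T_n\mp T_n=(B^2\mp xA^2)/(x^{1/2}AB)$, and the target numerator is $\phi(\pm Q)f(\mp x,\mp Q^2/x)$. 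With your exponent the numerator becomes $B^2\mp x^{1/2}A^2$, whose two terms pick up different multipliers under $x\mapsto Q^4x$ (namely $Q^{-2}x^{-2}$ and $Q^{-4}x^{-2}$). So it is not a theta function of a single type, and neither a dissection identity nor your zero-counting argument can apply to it.

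Independently of that, the Entry~29 step never touches $B^2\mp xA^2$. With $a=-Qx$, $b=-Q^3/x$, $c=Q/x$, $d=Q^3x$ it gives $f(-Qx,-Q^3/x)f(Q/x,Q^3x)+f(Qx,Q^3/x)f(-Q/x,-Q^3x)=2f(-Q^2,-Q^6)f(-Q^4x^2,-Q^4/x^2)$. That is a relation among mixed products, and no index shift turns it into the squares you need.

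The missing idea, which is the paper's route, is to factor $B^2-xA^2=(B-x^{1/2}A)(B+x^{1/2}A)$, i.e.\ to work with $1/\sqrt{T_n}\mp\sqrt{T_n}$. Each factor then collapses to one theta function by Lemma~\ref{al2.1}: at $(a,b)=(\mp x^{1/2},\pm Qx^{-1/2})$ it gives $B\mp x^{1/2}A=f(\mp x^{1/2},\pm Qx^{-1/2})$. From there:
\begin{itemize}
\item \eqref{a2.7} gives $B^2-xA^2=\phi(Q)f(-x,-Q^2/x)$;
\item \eqref{a2.6} gives $AB=\psi(Q^2)f(-Q/x,-Qx)$;
\item for the plus sign, square $1/\sqrt{T_n}+\sqrt{T_n}$ and apply Lemma~\ref{al2.3} at $(a,b)=(x^{1/2},-Qx^{-1/2})$. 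Its second term, $2x^{1/2}f(-Q/x,-Qx)\psi(Q^2)$, exactly cancels the $-2$.
\end{itemize}

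Your elliptic-function fallback can be repaired once the exponents are fixed. Both $B^2\mp xA^2$ and $f(\mp x,\mp Q^2/x)$ satisfy $g(Q^4x)=g(x)/(Q^2x^2)$. Using $f(a,b)=af(a^2b,1/a)$, the minus-sign numerator vanishes at $x=1,Q^2$ and the plus-sign one at $x=-1,-Q^2$, exactly where the targets do. Your normalization, however, does not work. The point $x=1$ is a common zero, so it fixes nothing. Matching lowest-order terms in $Q$ cannot identify a constant that is the entire series $\phi(\pm Q)$. You need an exact evaluation at a non-zero point: for instance at $x=-1$, where $B^2+A^2=2\psi^2(Q)$ and $f(1,Q^2)=2\psi(Q^2)$. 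That in turn requires the additional identity $\psi^2(Q)=\phi(Q)\psi(Q^2)$, which is not among the tools you listed.
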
 
	
	\begin{theorem}\label{at1.3}
		Let $U_n(q)$ be defined as in \eqref{a1.5}. We have
		\begin{align}
			\frac{1}{U_n(q)} - U_n(q)
			&= \frac{\phi(q^{\frac{k}{4}})\, f\left(-q^{\frac{(2i-1)}{2}},  -q^{\frac{k-(2i-1)}{2}}\right)}
			{q^{\left(\frac{2i-1}{4}\right)}\psi(q^{\frac{k}{2}})\, f\left(-q^{\frac{k-(4i-2)}{4}},-q^{\frac{k+(4i-2)}{4}}\right)}\label{a1.10}\\
			\frac{1}{U_n(q)} + U_n(q)
			&= \frac{\phi(-q^{\frac{k}{4}})\, f\left(q^{\frac{(2i-1)}{2}}, q^{\frac{k-(2i-1)}{2}}\right)}
			{q^{\left(\frac{2i-1}{4}\right)}\psi(q^{\frac{k}{2}})\, f\left(-q^{\frac{k-(4i-2)}{4}},-q^{\frac{k+(4i-2)}{4}}\right)}.\label{a1.11}
		\end{align}
	\end{theorem}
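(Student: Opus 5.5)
Theorem~\ref{at1.3} has the same shape as Theorems~\ref{at1.1} and~\ref{at1.2}. Replacing $4i-2$ by $(2i-1)/2$ in \eqref{a1.3} and \eqref{a1.6}--\eqref{a1.7} gives exactly \eqref{a1.5} and \eqref{a1.10}--\eqref{a1.11}. So I would prove one identity for an arbitrary real shift and specialise it. Put $t=(2i-1)/2$, $r=\frac{k}{4}-t$ and $s=\frac{k}{4}+t$, so that $r+s=\frac{k}{2}$ and $\frac{s-r}{4}=\frac{2i-1}{4}$. Then \eqref{a1.5} reads
\begin{equation*}
U_n(q)=q^{\frac{s-r}{4}}\frac{(q^{r},q^{k-r};q^{k})_\infty}{(q^{s},q^{k-s};q^{k})_\infty},
\end{equation*}
and hence
\begin{equation*}
\frac{1}{U_n(q)}\mp U_n(q)=\frac{q^{-\frac{s-r}{4}}(q^{s},q^{k-s};q^{k})_\infty^2\mp q^{\frac{s-r}{4}}(q^{r},q^{k-r};q^{k})_\infty^2}{(q^{r},q^{k-r},q^{s},q^{k-s};q^{k})_\infty}.
\end{equation*}

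\textbf{Denominator.} Since $s=\frac{k}{2}-r$ and $k-s=\frac{k}{2}+r$, the four products merge into one:
\begin{equation*}
(q^{r},q^{k-r},q^{s},q^{k-s};q^{k})_\infty=(q^{r},q^{\frac{k}{2}-r};q^{\frac{k}{2}})_\infty=\frac{f(-q^{r},-q^{\frac{k}{2}-r})}{(q^{k/2};q^{k/2})_\infty}.
\end{equation*}
This is exactly the factor $f\bigl(-q^{\frac{k-(4i-2)}{4}},-q^{\frac{k+(4i-2)}{4}}\bigr)$ in the denominators of \eqref{a1.10} and \eqref{a1.11}. The leftover $(q^{k/2};q^{k/2})_\infty$ will have to combine with the numerator's prefactor $(q^{k};q^{k})_\infty^{-2}$ into $1/\psi(q^{k/2})$. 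This uses $\psi(q)=(q^2;q^2)_\infty^2/(q;q^2)_\infty$ with $q\to q^{k/2}$, so that $(q^{k/2};q^{k/2})_\infty/(q^{k};q^{k})_\infty^{2}=1/\psi(q^{k/2})$.

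\textbf{Numerator.} Each product is a Jacobi triple product: $(q^{s},q^{k-s};q^k)_\infty=f(-q^s,-q^{k-s})/(q^k;q^k)_\infty$, and likewise for $r$. The numerator is therefore $(q^k;q^k)_\infty^{-2}$ times
\begin{equation*}
N_\mp=q^{-\frac{s-r}{4}}f(-q^{s},-q^{k-s})^2\mp q^{\frac{s-r}{4}}f(-q^{r},-q^{k-r})^2 .
\end{equation*}
Now I would expand both squares as double series $\sum_{m,n}$. I would then use the substitution $m+n=u$, $m-n=v$ (with $u\equiv v \pmod 2$). This is the mechanism behind Ramanujan's product formula \cite[Entry~29]{Berndt} and Schr\"oter's formula.

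After this substitution, the quadratic form $\frac{k}{2}(m^2+n^2)+\dots$ splits into a $u$-part and a $v$-part. The extra factors $q^{\mp(s-r)/4}$ are what make the two squares combine.

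\begin{itemize}
\item One summation should produce a theta series in base $q^{k/4}$, namely $\phi(\pm q^{k/4})$.
\item The other should produce $f(\mp q^{t},\mp q^{\frac{k}{2}-t})$, possibly after the $r$-square is re-indexed by $n\to -n$ or $n\to n+1$ to align its exponents with those of the $s$-square.
\end{itemize}

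With $t=(2i-1)/2$ and $\frac{k}{2}-t=\frac{k-(2i-1)}{2}$, the aim is
\begin{equation*}
N_-=q^{-\frac{2i-1}{4}}\,\phi(q^{k/4})\,f(-q^{t},-q^{\frac{k}{2}-t}),\qquad N_+=q^{-\frac{2i-1}{4}}\,\phi(-q^{k/4})\,f(q^{t},q^{\frac{k}{2}-t}).
\end{equation*}
Dividing by the denominator then gives \eqref{a1.10} and \eqref{a1.11}.

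\textbf{Main obstacle.} The hard step is this last one: the parity split and the index shifts must be arranged so that the two squares fuse into a single product, with the correct sign pattern for $\mp$. Fallback: if a clean double-series argument does not close, I would apply Entry~29 with $(a,b,c,d)=(-q^{s},-q^{k-s},-q^{s},-q^{k-s})$ and with $(-q^{r},-q^{k-r},-q^{r},-q^{k-r})$ separately. I would then subtract or add the two expansions and match terms using $f(a,b)=a\,f(a^{-1},a^2b)$ \cite[Entry~18]{Berndt}. As a last sanity check I would verify the leading $q$-powers for a small case, say $k=4$, $i=\frac12$.
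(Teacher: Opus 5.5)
Your reduction is correct. Put $t=\frac{2i-1}{2}$, $A=f(-q^{\frac k4+t},-q^{\frac{3k}4-t})$ and $B=f(-q^{\frac k4-t},-q^{\frac{3k}4+t})$. The four products in the denominator merge as you say. (The formula you quote should read $\psi(q)=(q^2;q^2)_\infty^2/(q;q)_\infty$, but your conclusion $(q^{k/2};q^{k/2})_\infty/(q^k;q^k)_\infty^2=1/\psi(q^{k/2})$ is right.) The theorem is then equivalent to
\[
A^2-q^{t}B^2=\phi(q^{\frac k4})\,f(-q^{t},-q^{\frac k2-t}),\qquad
A^2+q^{t}B^2=\phi(-q^{\frac k4})\,f(q^{t},q^{\frac k2-t}).
\]
The gap is that this pair is the entire content of the theorem, and you only state it as ``the aim''. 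The double-series step is described by what it should produce, not carried out.

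Your fallback, as stated, does not close it either. Entry~29 applied to $A\cdot A$ and to $B\cdot B$ only reproduces Lemma~\ref{al2.3}: $A^2=\phi(q^k)F_1-2q^{\frac k4+t}\psi(q^{2k})F_2$ and $B^2=\phi(q^k)F_2-2q^{\frac k4-t}\psi(q^{2k})F_1$, where $F_1=f(q^{\frac k2+2t},q^{\frac{3k}2-2t})$ and $F_2=f(q^{\frac k2-2t},q^{\frac{3k}2+2t})$. Hence $A^2\mp q^tB^2=\bigl(\phi(q^k)\pm2q^{\frac k4}\psi(q^{2k})\bigr)\bigl(F_1\mp q^tF_2\bigr)$. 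Recognising the two factors needs $\phi(q)=\phi(q^4)+2q\psi(q^8)$ and the dissection of Lemma~\ref{al2.1}; the relation $f(a,b)=af(a^{-1},a^2b)$ alone will not do it. Your proposed test case $k=4$, $i=\frac12$ is degenerate: $t=0$ and $U_n(q)=1$.

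The cleanest way to finish your main route is to run the lattice argument from the product side:
\[
\phi(-q^{\frac k4})\,f(q^{t},q^{\frac k2-t})=\sum_{m,n\in\mathbb{Z}}(-1)^{m}q^{\frac k4(m^2+n^2)+(t-\frac k4)n}.
\]
The terms with $m\equiv n \pmod 2$, written $(m,n)=(x-y,x+y)$, sum to exactly $A^2$. Those with $m\not\equiv n$, written $(m,n)=(x-y,x+y+1)$, sum to exactly $q^tB^2$. Replacing $(-1)^m$ by $(-1)^n$ gives $\phi(q^{\frac k4})f(-q^t,-q^{\frac k2-t})$ and flips the sign of the second class.

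Equivalently, apply Entry~29 with $(a,b,c,d)=(-q^{\frac k4},-q^{\frac k4},q^{t},q^{\frac k2-t})$. The sum and difference of $\phi(-q^{\frac k4})f(q^t,q^{\frac k2-t})$ and $\phi(q^{\frac k4})f(-q^t,-q^{\frac k2-t})$ are then $2A^2$ and $2q^tB^2$. With that step supplied, your proof is complete.

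Your proof then differs from the paper's, which never forms $A^2\pm q^tB^2$ directly. The paper works with $1/\sqrt{U_n}\mp\sqrt{U_n}$, whose numerators $A\mp q^{t/2}B=f(\mp q^{t/2},\pm q^{\frac k4-\frac t2})$ are single theta functions by Lemma~\ref{al2.1}. It gets the minus case from the product formula \eqref{a2.7}. It gets the plus case by squaring and applying Lemma~\ref{al2.3}, with \eqref{a2.6} for $AB$ cancelling the cross term.

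Your route handles both signs by one symmetric identity and avoids square roots and that cancellation. The paper's route yields the square-root identities as a by-product and needs only the Entry~30 corollaries.
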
 
	\indent The rest of the paper is organized as follows. In Section \ref{as2}, we recall some theta functions identities essential for the proof of Theorem \ref{at1.1}-\ref{at1.3} in Section \ref{as3}. In Section \ref{as4}, we obtain theta function identities for continued fraction of order seventy-six from our main theorems. As a result, we prove color partition identities in Section \ref{as5} and conclude with few vanishing coefficient results.
	
	\section{Preliminaries}\label{as2}
	In this section, we revisit theta functions identities which will be employed in the subsequent sections. The Ramanujan’s general theta-function $f(a,b)$ \cite[p.~34]{Berndt} is defined as
	\begin{equation*}
		f(a,b) = \sum_{n=-\infty}^{\infty} a^{\frac{n(n+1)}{2}} b^{\frac{n(n-1)}{2}}, 
	\quad |ab| < 1.
	\end{equation*}
Jacobi’s triple product identity \cite[p.~35, Entry 19]{Berndt} is arguably the most celebrated theorem in the theory of theta functions and can be stated in terms of $f(a,b)$, as
	\begin{equation}\label{a2}
		f(a,b) = (-a;ab)_\infty (-b;ab)_\infty (ab;ab)_\infty
		= (-a, -b, ab; ab)_\infty.
	\end{equation}
 The three most important special cases of \eqref{a2} are defined by \cite[p.~36, Entry 22]{Berndt}
	\begin{align}
		\phi(q) &:= f(q,q) = \sum_{n=-\infty}^{\infty} q^{n^2}
		= \frac{(-q; -q)_\infty}{(q; -q)_\infty}\label{a2.1}
	\end{align}
	\begin{align}
		\psi(q) &:= f(q,q^3) = \sum_{n=0}^{\infty} q^{\frac{n(n+1)}{2}}
		= \frac{(q^2; q^2)_\infty}{(q; q^2)_\infty} \label{a2.2}
	\end{align}
	\begin{align}
		f(-q) &:= f(-q,-q^2) = \sum_{n=-\infty}^{\infty} (-1)^n q^{\frac{n(3n-1)}{2}}
		= (q;q)_\infty.\label{a2.3}
	\end{align}
	Ramanujan also defined the function $\chi(q)$ \cite[p.~36, Entry 22(iv)]{Berndt} as
	\begin{equation*}
		\chi(q) = (-q; q^2)_\infty.
	\end{equation*}
	\indent To prove our main results we provide the following lemmas.
	\begin{lemma}\label{al2.1}
		We have the following:
		\begin{equation*}
				f(a,b)=f(a^3b,ab^3)+af(b/a,a^5b^3).
		\end{equation*}
	\end{lemma}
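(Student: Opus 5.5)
The plan is to prove the identity directly from the series definition of $f(a,b)$, splitting the summation index by its residue modulo $2$. This is the standard dissection (Entry 30(ii) in Berndt's book). Write
\[
f(a,b)=\sum_{n=-\infty}^{\infty} a^{\frac{n(n+1)}{2}} b^{\frac{n(n-1)}{2}}
=\sum_{n=2m} + \sum_{n=2m+1},
\]
where both sums are absolutely convergent for $|ab|<1$, so the rearrangement is legitimate.

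For the even terms, put $n=2m$. The exponents become $\frac{2m(2m+1)}{2}=2m^2+m$ for $a$ and $\frac{2m(2m-1)}{2}=2m^2-m$ for $b$. On the other side,
\[
(a^3b)^{\frac{m(m+1)}{2}}(ab^3)^{\frac{m(m-1)}{2}}
\]
has $a$-exponent $\frac{3m(m+1)+m(m-1)}{2}=2m^2+m$ and $b$-exponent $\frac{m(m+1)+3m(m-1)}{2}=2m^2-m$. These match, so the even part equals $f(a^3b,ab^3)$, and the condition $|a^3b\cdot ab^3|=|ab|^4<1$ holds.

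For the odd terms, put $n=2m+1$. The exponents become $(2m+1)(m+1)=2m^2+3m+1$ for $a$ and $m(2m+1)=2m^2+m$ for $b$. After factoring out one $a$, what remains is $a^{2m^2+3m}b^{2m^2+m}$. I will match this against
\[
(b/a)^{\frac{m(m+1)}{2}}(a^5b^3)^{\frac{m(m-1)}{2}}.
\]
Its $a$-exponent is $\frac{-m(m+1)+5m(m-1)}{2}=2m^2-3m$ and its $b$-exponent is $\frac{m(m+1)+3m(m-1)}{2}=2m^2-m$. This agrees with $a^{2m^2+3m}b^{2m^2+m}$ after the substitution $m\to -m$, which is a bijection of $\mathbb{Z}$ and so leaves the sum unchanged. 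The odd part is therefore $a\,f(b/a,a^5b^3)$, and the product of its arguments is $a^4b^4$, again of modulus less than $1$.

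Adding the two parts gives the lemma. The only delicate point is the odd part: one has to get the reindexing sign right and check that the arguments are exactly $b/a$ and $a^5b^3$ rather than some other pair with the same product.
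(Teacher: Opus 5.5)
Your proof is correct: the even/odd split is legitimate under $|ab|<1$, with $a\neq 0$ tacitly assumed so that $b/a$ makes sense, and your exponent bookkeeping checks out, including the reindexing $m\to -m$ in the odd part. The paper instead adds Berndt's Entry 30(ii) and (iii), $f(a,b)+f(-a,-b)=2f(a^3b,ab^3)$ and $f(a,b)-f(-a,-b)=2af(b/a,a^5b^3)$. Since $f(-a,-b)=\sum_n(-1)^n a^{n(n+1)/2}b^{n(n-1)/2}$, these two identities are exactly your even-index and odd-index sums, so your argument is the same dissection carried out from the definition rather than cited.
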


	\begin{proof}
		From \cite[p.~46]{Berndt}, Entry 30 (ii) and (iii), we have
		\begin{equation}
			f(a,b)+ f(-a,-b)=2f(a^3b,ab^3)
			\label{a2.4}
		\end{equation}
		and
		\begin{equation}
			f(a,b) - f(-a,-b)=2af(b/a,a^5b^3).
			\label{a2.5}
		\end{equation}
		By suitably manipulating equations \eqref{a2.4} and \eqref{a2.5}, the Lemma \ref{al2.1} follows.
	\end{proof}
	\begin{lemma}\label{al2.2}{\cite[p.~46, Entry~30(ii)-(iii)]{Berndt}}
	We have
		\begin{align}
			f(a,ab^2)f(b,a^2b) &=f(a,b)\psi(ab)\label{a2.6}\\
			f(a,b)f(-a,-b) &=f(-a^2,-b^2)\phi(-ab).\label{a2.7}
		\end{align}
	\end{lemma}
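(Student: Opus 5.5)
Both identities follow from Jacobi's triple product identity \eqref{a2}: I will write each theta function as an infinite product and compare the two sides factor by factor. Throughout I assume $|ab|<1$, so that every product converges absolutely and may be freely rearranged. The only tool needed, besides \eqref{a2}, is the elementary dissection
\[
(x;q)_\infty=(x;q^2)_\infty\,(xq;q^2)_\infty .
\]
I will apply it with base $q=ab$, splitting off residues modulo the base $a^2b^2$.

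For \eqref{a2.6}, apply \eqref{a2} to each factor on the left. Since $a\cdot ab^2=b\cdot a^2b=a^2b^2$, this gives
\[
f(a,ab^2)f(b,a^2b)=(-a,-ab^2;a^2b^2)_\infty\,(-b,-a^2b;a^2b^2)_\infty\,(a^2b^2;a^2b^2)_\infty^2 .
\]
The dissection with $q=ab$ gives $(-a;a^2b^2)_\infty(-a^2b;a^2b^2)_\infty=(-a;ab)_\infty$, and similarly $(-b;a^2b^2)_\infty(-ab^2;a^2b^2)_\infty=(-b;ab)_\infty$. Hence the left side equals
\[
(-a,-b;ab)_\infty\,(a^2b^2;a^2b^2)_\infty^2 .
\]
On the right, \eqref{a2} gives $f(a,b)=(-a,-b,ab;ab)_\infty$. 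By \eqref{a2.2}, $\psi(ab)=(a^2b^2;a^2b^2)_\infty/(ab;a^2b^2)_\infty$. Writing $(ab;ab)_\infty=(ab;a^2b^2)_\infty(a^2b^2;a^2b^2)_\infty$, the factor $(ab;a^2b^2)_\infty$ cancels. This leaves exactly $(-a,-b;ab)_\infty(a^2b^2;a^2b^2)_\infty^2$, so the two sides agree.

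For \eqref{a2.7}, \eqref{a2} gives
\[
f(a,b)f(-a,-b)=(-a,-b,ab;ab)_\infty\,(a,b,ab;ab)_\infty .
\]
Pairing $(-x;ab)_\infty(x;ab)_\infty=(x^2;a^2b^2)_\infty$ for $x=a,b$, this becomes
\[
(a^2,b^2;a^2b^2)_\infty\,(ab;ab)_\infty^2 .
\]
On the other side, $f(-a^2,-b^2)=(a^2,b^2,a^2b^2;a^2b^2)_\infty$. From \eqref{a2.1}, or directly from \eqref{a2} with $a=b=-q$, one has $\phi(-q)=(q;q)_\infty/(-q;q)_\infty=(q;q)_\infty^2/(q^2;q^2)_\infty$. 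Taking $q=ab$, the factor $(a^2b^2;a^2b^2)_\infty$ cancels, and the product $f(-a^2,-b^2)\phi(-ab)$ reduces to the same expression.

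There is no genuine obstacle; the argument is pure bookkeeping. The one point needing care is choosing the correct splitting of $(-a;ab)_\infty$ and $(-b;ab)_\infty$ into even and odd parts relative to the base $a^2b^2$ in \eqref{a2.6}. One must check that $-a^2b$ pairs with $-a$, and $-ab^2$ pairs with $-b$, rather than the other way around.
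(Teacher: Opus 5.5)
Your proof is correct. With $|ab|<1$, the splittings
$(-a;ab)_\infty=(-a,-a^2b;a^2b^2)_\infty$, $(-b;ab)_\infty=(-b,-ab^2;a^2b^2)_\infty$ and $(ab;ab)_\infty=(ab,a^2b^2;a^2b^2)_\infty$ reduce both sides of \eqref{a2.6} to $(-a,-b;ab)_\infty(a^2b^2;a^2b^2)_\infty^2$. Likewise, the pairing $(x,-x;ab)_\infty=(x^2;a^2b^2)_\infty$ together with $\phi(-q)=(q;q)_\infty^2/(q^2;q^2)_\infty$ reduces both sides of \eqref{a2.7} to $(a^2,b^2;a^2b^2)_\infty(ab;ab)_\infty^2$. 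Your warning about which factors pair in \eqref{a2.6} is also on target.

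The paper gives no proof of this lemma at all; it only quotes Berndt's Entry 30. There is a labelling slip there: ``(ii)--(iii)'' is the same label the paper attaches to \eqref{a2.4}--\eqref{a2.5} in Lemma~\ref{al2.1}, whereas these two product formulas are Entry 30(i) and (iv) in Berndt.

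So the only difference is that you supply a self-contained derivation where the paper cites. Your derivation uses nothing beyond the triple product identity \eqref{a2} and the product forms \eqref{a2.1}--\eqref{a2.2} already in the paper, which is the standard way these identities are established. The citation buys brevity. Your version makes Section~\ref{as2} independent of the reference and states explicitly the hypothesis $|ab|<1$ under which the identities hold.
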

	\begin{lemma}\label{al2.3}{{\cite[p.~46, Entry~30(v)-(vi)]{Berndt}}}
		We have
		\begin{equation*}
			f^2(a,b)=f(a^2,b^2)\phi(ab)+2af(b/a,a^3b)\psi(a^2b^2).
		\end{equation*}
	\end{lemma}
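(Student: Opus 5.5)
The plan is to prove the identity directly by multiplying the two defining series of $f(a,b)$ and re-indexing the resulting double sum according to the parity of $m+n$. This is the standard route to Entry~30(v)--(vi) in \cite{Berndt}, and it needs nothing beyond the definition of $f(a,b)$ and of $\phi$, $\psi$ from \eqref{a2.1}--\eqref{a2.2}. For $|ab|<1$ the series converge absolutely, so the rearrangements are justified. We write
\[
f^2(a,b)=\sum_{m,n=-\infty}^{\infty} a^{\frac{m^2+n^2+m+n}{2}}\, b^{\frac{m^2+n^2-m-n}{2}} .
\]
Since $m+n\equiv m-n \pmod 2$, the index set $\mathbb{Z}^2$ splits into the pairs with $m+n$ even and those with $m+n$ odd.

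\emph{Even case.} Put $m=r+s$ and $n=r-s$ with $(r,s)\in\mathbb{Z}^2$; this is a bijection onto the even pairs. Then $m^2+n^2=2r^2+2s^2$ and $m+n=2r$, so the general term becomes $a^{r^2+r}b^{r^2-r}(ab)^{s^2}$. The sum over $r$ is $\sum_r (a^2)^{r(r+1)/2}(b^2)^{r(r-1)/2}=f(a^2,b^2)$. The sum over $s$ is $\phi(ab)$ by \eqref{a2.1}. This gives the first term $f(a^2,b^2)\phi(ab)$.

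\emph{Odd case.} Put $m=r+s+1$ and $n=r-s$, so that $m+n=2r+1$ and $m-n=2s+1$, again bijectively. A short computation gives $m^2+n^2=2r^2+2s^2+2r+2s+1$. The exponent of $a$ is then $1+(r^2+2r)+(s^2+s)$ and the exponent of $b$ is $r^2+(s^2+s)$. The term therefore factors as
\[
a\cdot a^{r^2+2r}b^{r^2}\cdot (ab)^{s^2+s}.
\]
For the $s$-sum, $\sum_{s\in\mathbb{Z}}(a^2b^2)^{s(s+1)/2}=2\psi(a^2b^2)$, because $s$ and $-1-s$ give equal terms, combined with the series form in \eqref{a2.2}. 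For the $r$-sum, expand $f(b/a,a^3b)$ by the definition. Its $n$-th term has $a$-exponent $n^2-2n$ and $b$-exponent $n^2$, so the substitution $n=-r$ shows $\sum_r a^{r^2+2r}b^{r^2}=f(b/a,a^3b)$. Adding the two cases gives the lemma.

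The only step needing care is the odd case. There I must choose the parametrization so that the exponents separate cleanly into an $r$-part and an $s$-part, and then recognize the $r$-part as $f(b/a,a^3b)$ through the reflection $n\mapsto -n$ rather than in its raw form. I would fix the shift $m=r+s+1$ first and then check the exponent bookkeeping against the definition of $f$.
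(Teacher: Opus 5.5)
Your argument is correct and complete. The exponent bookkeeping checks out in both parity classes: in the odd case $m^2+n^2=2r^2+2s^2+2r+2s+1$, you have $f(b/a,a^3b)=\sum_n a^{n^2-2n}b^{n^2}$, and $\sum_{s\in\mathbb{Z}}q^{s(s+1)/2}=2\psi(q)$.

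The paper gives no derivation of its own. It simply quotes Entry~30(v)--(vi) of \cite{Berndt}, namely $f^2(a,b)+f^2(-a,-b)=2f(a^2,b^2)\phi(ab)$ and $f^2(a,b)-f^2(-a,-b)=4af(b/a,a^3b)\psi(a^2b^2)$. The lemma is the half-sum of these two. They are the case $c=a$, $d=b$ of Entry~29 (using $f(1,q)=2\psi(q)$). The standard proof of that entry is the same parity split you use: replacing $(a,b)$ by $(-a,-b)$ multiplies the $(m,n)$ term of the double series by $(-1)^{m^2+n^2}=(-1)^{m+n}$. So the sum and difference isolate the classes with $m+n$ even and $m+n$ odd.

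You are therefore reproducing the mechanism behind the citation, specialized to $c=a$, $d=b$ and without passing through $f^2(-a,-b)$. This makes the lemma self-contained at the cost of a few lines, whereas the cited route costs nothing in the paper but hides the computation. Your computation also recovers the two cited entries separately for free. Your even-class sum is $f(a^2,b^2)\phi(ab)$, your odd-class sum is $2af(b/a,a^3b)\psi(a^2b^2)$, and their difference is exactly $f^2(-a,-b)$.
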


	\section{Proof of Theorem \ref{at1.1}-\ref{at1.3}} \label{as3}
	In this section, keeping in mind the prerequisites of Section \ref{as2}, we offer the proof of Theorem \ref{at1.1}.
	\begin{proof}[Proof of Theorem \ref{at1.1}] 
	From \eqref{a1.3}, we see that
		\begin{equation}
			\frac{1}{\sqrt{S_n(q)}} - \sqrt{S_n(q)}
			= 
			\frac{f(-q^{\frac{k}{4}+(4i-2)}, -q^{\frac{3k}{4}-(4i-2)})-q^{(2i-1)}f(-q^{\frac{k}{4}-(4i-2)}, -q^{\frac{3k}{4}+(4i-2)})}
			{\sqrt{{q^{(2i-1)}\, f(-q^{\frac{k}{4}-(4i-2)},-q^{{\frac{3k}{4}+(4i-2)}})} f(-q^{\frac{k}{4}+(4i-2)}, -q^{\frac{3k}{4}-(4i-2)})}}.
			\label{a3.1}
		\end{equation}
	Using Lemma \ref{al2.1} with $(a, b) = {\left(-q^{(2i-1)}, q^{\frac{k}{4}-(2i-1)} \right)}$ and ${\left(q^{(2i-1)}, -q^{\frac{k}{4}-(2i-1)}\right)}$, we obtain
	\begin{equation}
		f(-q^{(2i-1)},q^{\frac{k}{4}-(2i-1)})
		=
		f(-q^{\frac{k}{4}+(4i-2)},-q^{\frac{3k}{4}-(4i-2)})-q^{(2i-1)}f(-q^{\frac{k}{4}-(4i-2)},-q^{\frac{3k}{4}+(4i-2)})
		\label{a3.2}
	\end{equation}
	and
	\begin{equation}			
		f(q^{(2i-1)},-q^{\frac{k}{4}-(2i-1)})
		=
		f(-q^{\frac{k}{4}+(4i-2)},-q^{\frac{3k}{4}-(4i-2)})+q^{(2i-1)}f(-q^{\frac{k}{4}-(4i-2)},-q^{\frac{3k}{4}+(4i-2)}),
		\label{a3.3}
	\end{equation}
	respectively.\\
	Applying \eqref{a3.2} in \eqref{a3.1}, we find that
	\begin{equation}
		\frac{1}{\sqrt{S_n(q)}} - \sqrt{S_n(q)}
		= \frac{f(-q^{(2i-1)},q^{\frac{k}{4}-(2i-1)})}
		{\sqrt{{q^{(2i-1)}\, f(-q^{\frac{k}{4}-(4i-2)},-q^{{\frac{3k}{4}+(4i-2)}})} f(-q^{\frac{k}{4}+(4i-2)}, -q^{\frac{3k}{4}-(4i-2)})}}.
		\label{a3.4}
	\end{equation}
	Thanks to \eqref{a1.3} and \eqref{a3.3}, we obtain	
	\begin{equation}
		\frac{1}{\sqrt{S_n(q)}} + \sqrt{S_n(q)}
		= \frac{f(q^{(2i-1)},-q^{\frac{k}{4}-(2i-1)})}
		{\sqrt{{q^{(2i-1)}\, f(-q^{\frac{k}{4}-(4i-2)},-q^{{\frac{3k}{4}+(4i-2)}})} f(-q^{\frac{k}{4}+(4i-2)}, -q^{\frac{3k}{4}-(4i-2)})}}.
		\label{a3.5}
	\end{equation}
	Now, from \eqref{a3.4} and \eqref{a3.5}, we have
	\begin{equation}
		\frac{1}{S_n(q)}-S_n(q)
		=
		\frac{f(-q^{(2i-1)},q^{\frac{k}{4}-(2i-1)})\, f(q^{(2i-1)},-q^{\frac{k}{4}-(2i-1)})}{{q^{(2i-1)}\, f(-q^{\frac{k}{4}-(4i-2)},-q^{{\frac{3k}{4}+(4i-2)}})} f(-q^{\frac{k}{4}+(4i-2)}, -q^{\frac{3k}{4}-(4i-2)})}.
		\label{a3.6}
	\end{equation}
	Again, we set $(a, b)= {\left(-q^{\frac{k}{4}-(4i-2)}, -q^{\frac{k}{4}+(4i-2)}\right)}$ and ${\left(-q^{(2i-1)},q^{\frac{k}{4}-(2i-1)}\right)}$ in \eqref{a2.6} and \eqref{a2.7}, to obtain
	\begin{equation}
		f(-q^{\frac{k}{4}-(4i-2)},-q^{\frac{3k}{4}+(4i-2)})f(-q^{\frac{k}{4}+(4i-2)},-q^{\frac{3k}{4}-(4i-2)})
		=
		f(-q^{\frac{k}{4}-(4i-2)},-q^{\frac{k}{4}+(4i-2)})\psi(q^{\frac{k}{2}})
		\label{a3.7}
	\end{equation}
	and 
	\begin{equation}
		f(-q^{(2i-1)},q^{\frac{k}{4}-(2i-1)})f(q^{(2i-1)},-q^{\frac{k}{4}-(2i-1)})=f(-q^{4i-2},-q^{\frac{k}{2}-(4i-2)})\phi(q^{\frac{k}{4}}),
		\label{a3.8}
	\end{equation}
	respectively. Employing \eqref{a3.7} and \eqref{a3.8} in \eqref{a3.6}, we complete the proof of \eqref{a1.6}.\\
	\indent Next, we square both the sides of \eqref{a3.5}, to obtain
	\begin{equation}
		\frac{1}{S_n(q)}+S_n(q)
		=
		\frac{f^2(-q^{(2i-1)},q^{\frac{k}{4}-(2i-1)})}
		{q^{(2i-1)}\, f(-q^{\frac{k}{4}-(4i-2)},-q^{{\frac{3k}{4}+(4i-2)}}) f(-q^{\frac{k}{4}+(4i-2)}, -q^{\frac{3k}{4}-(4i-2)})}-2.
		\label{a3.9}
	\end{equation}
	Using Lemma \ref{al2.3} with $(a, b)= {\left(-q^{(2i-1)},q^{\frac{k}{4}-(2i-1)}\right)}$, we obtain
	\begin{align}
		f^2(-q^{(2i-1)}, q^{\frac{k}{4}-(2i-1)})=&f(q^{(4i-2)},q^{\frac{k}{2}-(4i-2)})\phi(-q^{\frac{k}{4}})\nonumber\\
		&+2q^{(2i-1)}f(-q^{\frac{k}{4}-(4i-2)},-q^{\frac{k}{4}+(4i-2)})\psi(q^{\frac{k}{2}}).                                                    
		\label{a3.10}
	\end{align}
	Employing \eqref{a3.7} and \eqref{a3.10} in \eqref{a3.9} and simplifying, we arrive at \eqref{a1.7}. This completes the proof of Theorem \ref{at1.1}.
	\end{proof}
	\begin{proof}[Proof of Theorem \ref{at1.2}-\ref{at1.3}]
		The proof of the Theorems \ref{at1.2} and \ref{at1.3} are similar to the proof of Theorem \ref{at1.1}, so we omit the details.
	\end{proof}
	\begin{remark}
	As $k$ denotes the order of the identity, suitable choices of $k$ in the generalized formula results in continued fractions of different even orders established in \cite{Saikia.1, Saikia.4, Saikia.2, Saikia.3, Raksha, D.S}.	
	\end{remark}
	\section{Application of the Generalized Formula} \label{as4}
	 In this section, we establish theta-function  identities for the continued fractions $Y_n(q)$ of order seventy-six, where $n=1, 2, 3, 4, 5, 6, 7, 8$ and $9$.
	\begin{theorem}\label{at4.1}
		We have
		\begin{align} 
		\frac{1}{Y_1(q)} \pm Y_1(q)
		&= \frac{\phi(\pm q^{19})\, f(\pm q^{2}, \pm q^{36})}
		{q\psi(q^{38})\, f(-q^{17},-q^{21})},\label{a4.1}\\
		\frac{1}{Y_2(q)} \pm Y_2(q)
		&= \frac{\phi(\pm q^{19})\, f(\pm q^{4}, \pm q^{34})}
		{q^{2}\psi(q^{38})\, f(-q^{15},-q^{23})},\label{a4.2}\\
		\frac{1}{Y_3(q)} \pm Y_3(q)
		&= \frac{\phi(\pm q^{19})\, f(\pm q^{6}, \pm q^{32})}
		{q^{3}\psi(q^{38})\, f(-q^{13},-q^{25})},\label{a4.3}\\ 
		\frac{1}{Y_4(q)} \pm Y_4(q)
		&= \frac{\phi(\pm q^{19})\, f(\pm q^{8}, \pm q^{30})}
		{q^{4}\psi(q^{38})\, f(-q^{11},-q^{27})},\label{a4.4}\\ 
		\frac{1}{Y_5(q)} \pm Y_5(q)
		&= \frac{\phi(\pm q^{19})\, f(\pm q^{10}, \pm q^{28})}
		{q^{5}\psi(q^{38})\, f(-q^{9},-q^{29})},\label{a4.5}\\ 
		\frac{1}{Y_6(q)} \pm Y_6(q)
		&= \frac{\phi(\pm q^{19})\, f(\pm q^{12}, \pm q^{26})}
		{q^{6}\psi(q^{38})\, f(-q^{7},-q^{31})},\label{a4.6}\\ 
		\frac{1}{Y_7(q)} \pm Y_7(q)
		&= \frac{\phi(\pm q^{19})\, f(\pm q^{14}, \pm q^{24})}
		{q^{7}\psi(q^{38})\, f(-q^{5},-q^{33})},\label{a4.7}\\
		\frac{1}{Y_8(q)} \pm Y_8(q)
		&= \frac{\phi(\pm q^{19})\, f(\pm q^{16}, \pm q^{22})}
		{q^{8}\psi(q^{38})\, f(-q^{3},-q^{35})},\label{a4.8}\\ 
		\frac{1}{Y_9(q)} \pm Y_9(q)
		&= \frac{\phi(\pm q^{19})\, f(\pm q^{18}, \pm q^{20})}
		{q^{9}\psi(q^{38})\, f(-q,-q^{37})}.\label{a4.9}
	\end{align}	
	\end{theorem}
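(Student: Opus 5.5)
The plan is to obtain every identity in Theorem \ref{at4.1} as a direct specialization of Theorem \ref{at1.1}, with no new theta-function manipulation. For order seventy-six we set $k=76$, so that $\frac{k}{4}=19$, $\frac{k}{2}=38$ and $\frac{3k}{4}=57$. For $n=1,\dots,9$ we take $m=n+1$ in \eqref{a1.3}, i.e.\ $i=\frac{n+1}{2}$. Then $2i-1=n$ and $4i-2=2n$, and the continued fraction becomes
\[
Y_n(q)=q^{n}\frac{f(-q^{19-2n},-q^{57+2n})}{f(-q^{19+2n},-q^{57-2n})}.
\]
I take this as the definition of $Y_n(q)$ for this section, together with the continued fraction expansion inherited from \eqref{a1.3}. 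The restriction $1\le n\le 9$ is exactly what keeps $19-2n>0$, so that all exponents are positive and the products converge.

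With these parameters, \eqref{a1.6} reads
\[
\frac{1}{Y_n(q)}-Y_n(q)=\frac{\phi(q^{19})\,f(-q^{2n},-q^{38-2n})}{q^{n}\psi(q^{38})\,f(-q^{19-2n},-q^{19+2n})},
\]
and \eqref{a1.7} gives the same expression with $\phi(-q^{19})\,f(q^{2n},q^{38-2n})$ in the numerator. Substituting $n=1,\dots,9$ then produces the pairs \eqref{a4.1}--\eqref{a4.9} term by term. For example, $n=1$ gives $f(\pm q^{2},\pm q^{36})$ and $f(-q^{17},-q^{21})$, and $n=9$ gives $f(\pm q^{18},\pm q^{20})$ and $f(-q,-q^{37})$. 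One could equally use Theorem \ref{at1.2} with $2i-1=2n$; this leads to the same $Y_n$, so the choice of theorem is immaterial.

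The only point requiring care is the sign bookkeeping, since this is the one place where the derivation can go wrong. In Theorem \ref{at1.1}, the minus sign on the left pairs with $\phi(q^{k/4})$ and $f(-\cdot,-\cdot)$, while the plus sign pairs with $\phi(-q^{k/4})$ and $f(+\cdot,+\cdot)$. The compressed "$\pm$" notation in Theorem \ref{at4.1} must therefore be read with exactly this correspondence: $\phi$ takes the sign opposite to the left-hand side, and the two arguments of $f$ take the same sign as the left-hand side. With that reading, each of the eighteen identities follows immediately from \eqref{a1.6}--\eqref{a1.7}.
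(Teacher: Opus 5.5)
Your proposal is correct and is exactly the paper's argument: take $k=76$ and specialize Theorem~\ref{at1.1} at $i=\frac{n+1}{2}$ (the paper's $i=1,\frac{3}{2},\dots,5$), so that \eqref{a1.6}--\eqref{a1.7} give the nine pairs directly. Your sign caveat is also right and in fact necessary. With the usual $\pm$ convention the statement has the sign of $\phi$ flipped relative to \eqref{a1.6}--\eqref{a1.7}: for instance, the coefficient of $q^{18}$ in $1/Y_1(q)+Y_1(q)$ is $-1$, whereas $\phi(q^{19})f(q^{2},q^{36})/\bigl(q\psi(q^{38})f(-q^{17},-q^{21})\bigr)$ has $+3$ there. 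So what is actually established is $\frac{1}{Y_n(q)}\pm Y_n(q)=\frac{\phi(\mp q^{19})\,f(\pm q^{2n},\pm q^{38-2n})}{q^{n}\psi(q^{38})\,f(-q^{19-2n},-q^{19+2n})}$.
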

	\begin{proof}
		 Replacing $q$ by $q^{\frac{k}{4}}$ and substituting $i=1, 3/2, 2, 5/2, 3, 7/2, 4, 9/2$ and $5$ in \eqref{a1.3}, the proof of Theorem \ref{at4.1} follows similar to that of Theorem \ref{at1.1}. Therefore, we omit the details.
	\end{proof}
	\section{Some Partition-theoretic Results}\label{as5}
	In this section, we obtain color partition identities employing theta-function identities established in Section \ref{as4}.\\
	 \indent To discern the combinatorial interpretation of the theta function identities of $Y_9(q)$, we first define the partition function $p(n)$ to be the number of ways the positive integer $n$ can be written as the sum of positive integers, with the order of these positive integers irrelevant. For example, $p(4)=5$, since there are $5$ ways to write $4$ as sum of positive integers, namely, $(4), (3,1), (2,2), (2,1,1), (1,1,1,1)$. We now observe that a part in a partition of $n$ is said to have $r$ colors if each part has $r$ copies and all of
	them are viewed as distinct objects. For example, if each part of a partition of $3$ has $2$ colors, say yellow and
	blue, then the number of $2$-color partitions of $3$ is $10$, namely,
$(3_y), (3_b), (2_y, 1_y), (2_y, 1_b), (2_b, 1_y), (2_b, 1_b), (1_y, 1_y, 1_y), (1_y, 1_y, 1_b), (1_y, 1_b, 1_b), (1_b, 1_b, 1_b)$.

 For any positive integer $n$ and $r$, let $C_r(n)$ denote the number of partitions of $n$ in which each part has $r$ distinct colors. The generating function of $C_r(n)$ is given by
	\begin{equation*}
		\sum_{n=0}^{\infty} C_r(n) q^n = \frac{1}{(q;q)_\infty^{\,r}} .
	\end{equation*}
	For positive integers $s,m$ and $r$, the quotient $\frac{1}{(q^s;q^m)_\infty^{\,r}}$ is the generating function for the number of partitions of $n$ with parts congruent to $s$ modulo $m$ and
	each part having $r$ colors. For example, the generating function for the number of partitions of a positive integer with parts congruent to $s_1$
	or $s_2$ modulo $m$, each part having $r$ distinct colors is given by
	\begin{equation*}
		\frac{1}{(q^{s_1};q^m)_\infty^{\,r}(q^{s_2};q^m)_\infty^{\,r}}
		=
		\frac{1}{(q^{s_1},q^{s_2};q^m)_\infty^{\,r}},
	\end{equation*}
	and notational convenience, we use $(q^{r\pm};q^t)_\infty := (q^r,q^{t-r};q^t)_\infty$, where $r$ and $t$ are positive integers and $r<t$.\\
	\indent In the following theorems, we offer the combinatorial interpretation of \eqref{a4.9}.
	
	\begin{theorem}\label{at5.1}
		For any integer $n \geq 18$, let $K_1(n)$ denote the number of partitions of $n$ into parts $\equiv \pm 1, \pm 18, \pm 20$ or $\pm 38 \pmod{76}$ such that the parts $\equiv \pm 1$ and $\pm 38 \pmod{76}$ have $2$ colors. Let $K_2(n)$ denote the number of partitions of $n$ into parts 
		$\equiv \pm 18, \pm 20, \pm 37$ or $\pm 38 \pmod{76}$ such that parts $\equiv \pm 33$ and $\pm 38 \pmod{76}$ have $2$ colors. 
		Let $K_3(n)$ denote the number of partitions of $n$ into parts $\equiv \pm 1, \pm 17$ and $\pm 37 \pmod{76}$ with $2$ colors.
		Then
		\[K_1(n) - K_2(n-18) - K_3(n) = 0.\]
	\end{theorem}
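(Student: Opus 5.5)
We will read off Theorem \ref{at5.1} from the ``minus'' case of \eqref{a4.9}, i.e.\ the case $k=76$, $i=5$ of \eqref{a1.6}, by turning every theta function into an infinite product modulo $76$ and comparing coefficients. For $k=76$, $i=5$ we have $k/4=19$ and $4i-2=18$. So \eqref{a1.3} gives
\[
Y_9(q)=q^{9}\frac{(q,q^{75};q^{76})_\infty}{(q^{37},q^{39};q^{76})_\infty}=q^9\frac{(q^{1\pm};q^{76})_\infty}{(q^{37\pm};q^{76})_\infty}.
\]
Multiplying \eqref{a4.9} (with the lower sign) by $q^9$ then gives
\[
\frac{(q^{37\pm};q^{76})_\infty}{(q^{1\pm};q^{76})_\infty}-q^{18}\frac{(q^{1\pm};q^{76})_\infty}{(q^{37\pm};q^{76})_\infty}
=\frac{\phi(q^{19})\,f(-q^{18},-q^{20})}{\psi(q^{38})\,f(-q,-q^{37})}.
\]

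Next we convert the right-hand side into products to modulus $76$, using Jacobi's triple product \eqref{a2} and the product forms \eqref{a2.1}--\eqref{a2.2}:
\[
f(-q^{18},-q^{20})=(q^{18},q^{20},q^{38};q^{38})_\infty=(q^{18\pm},q^{20\pm};q^{76})_\infty(q^{38};q^{38})_\infty,
\]
\[
f(-q,-q^{37})=(q^{1\pm},q^{37\pm};q^{76})_\infty(q^{38};q^{38})_\infty,
\]
\[
\psi(q^{38})=\frac{(q^{76};q^{76})_\infty}{(q^{38};q^{76})_\infty},\qquad
\phi(q^{19})=(-q^{19};q^{38})_\infty^2(q^{38};q^{38})_\infty .
\]
We rewrite $(-q^{19};q^{38})_\infty$ via $(-x;q^{38})_\infty=(x^2;q^{76})_\infty/(x;q^{38})_\infty$, so that everything becomes a product over residues modulo $76$. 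The factors $(q^{38};q^{38})_\infty$ and $(q^{76};q^{76})_\infty$ mostly cancel. What survives should be a quotient in which $(q^{18\pm},q^{20\pm};q^{76})_\infty$ sits in the numerator and $(q^{1\pm},q^{37\pm};q^{76})_\infty$ in the denominator, together with factors in residue $38$.

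We then divide the whole identity by one common product $P(q)$. The choice of $P$ is made so that each of the three terms becomes the reciprocal of a product, namely a generating function of the shape $1/(q^{s_1\pm},\dots;q^{76})_\infty^{r}$ with $r\in\{1,2\}$. Concretely, $P$ is built from $(q^{1\pm};q^{76})_\infty$, $(q^{37\pm};q^{76})_\infty$, $(q^{18\pm},q^{20\pm};q^{76})_\infty$ and $(q^{38};q^{76})_\infty$, with exponents fixed by clearing every numerator. After this step we expect:
\begin{itemize}[label={}, leftmargin=0pt]
\item the right-hand side to become $\sum K_1(n)q^n$, with parts $\pm1,\pm38$ doubled (so $\pm1$ and $38$ two-colored) and parts $\pm18,\pm20$ single;
\item the term carrying $q^{18}$ to become $q^{18}\sum K_2(n)q^n$;
\item the remaining term to become $\sum K_3(n)q^n$.
\end{itemize}
Moving terms across gives $\sum K_1(n)q^n-q^{18}\sum K_2(n)q^n-\sum K_3(n)q^n=0$. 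Equating coefficients of $q^n$ for $n\ge18$ yields $K_1(n)-K_2(n-18)-K_3(n)=0$.

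The main obstacle is the bookkeeping in choosing $P$. It must make all three terms simultaneously pure reciprocals of products with nonnegative exponents, and the resulting residue sets and color multiplicities must match the definitions of $K_1,K_2,K_3$. The residue $38$ needs care, since $\pm38\pmod{76}$ is a single class and it enters through both $\phi(q^{19})$ and $\psi(q^{38})$. I would first fix $P$ by forcing the right-hand side to equal exactly $1/\bigl((q^{1\pm},q^{38};q^{76})_\infty^2(q^{18\pm},q^{20\pm};q^{76})_\infty\bigr)$, the generating function of $K_1$. I would then read off whatever the other two terms become and reconcile them with the stated residues, which in the statement appear to contain misprints ($\pm33$ for $\pm37$, and $\pm17$, which never arises at modulus $76$ here). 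If a leftover factor does not cancel, I would try an alternative expression for the left-hand side, such as the ``plus'' case of \eqref{a4.9} or the split \eqref{a3.2}, to absorb it.
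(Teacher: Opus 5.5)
Your strategy (convert everything to products modulo $76$, divide by one common product, compare coefficients) is the paper's, and you rightly use $\phi(q^{19})f(-q^{18},-q^{20})$ from \eqref{a1.6}. The gap is in the step you yourself flag as the obstacle: the plan you give for it fails.

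First, you drop a factor. Since $(-q^{19};q^{38})_\infty^2=(q^{38};q^{76})_\infty^2/(q^{19\pm};q^{76})_\infty^2$, the right-hand side is exactly
\[
\frac{(q^{18\pm},q^{20\pm};q^{76})_\infty\,(q^{38};q^{76})_\infty^4}{(q^{1\pm},q^{37\pm};q^{76})_\infty\,(q^{19\pm};q^{76})_\infty^2}.
\]
So residue $19$ is present (this is what the misprinted $\pm17$ stands for), and residue $38$ appears to the fourth power, not the square.

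Second, you assign the theta side to $K_1$ and the term $(q^{37\pm};q^{76})_\infty/(q^{1\pm};q^{76})_\infty$ to $K_3$. No common divisor $P$ can achieve this. To remove residues $19$ and $37$ from the theta side, $P$ must contain $(q^{19\pm};q^{76})_\infty^{-2}(q^{37\pm};q^{76})_\infty^{-1}$. That puts $(q^{19\pm};q^{76})_\infty^{2}$ into the numerators of both other terms. Even if it could be done, your assignment reads $\sum K_3(n)q^n-q^{18}\sum K_2(n)q^n=\sum K_1(n)q^n$, i.e.\ $K_3(n)-K_2(n-18)-K_1(n)=0$. That is not the stated relation, so your ``moving terms across'' step does not follow. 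Switching to the plus case or to \eqref{a3.2} does not repair this.

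The missing idea is the right divisor. The minimal product clearing all numerators is $P=(q^{1\pm},q^{18\pm},q^{20\pm},q^{37\pm};q^{76})_\infty(q^{38};q^{76})_\infty^4$, and dividing by it gives
\begin{align*}
&\frac{1}{(q^{18\pm},q^{20\pm};q^{76})_\infty(q^{1\pm};q^{76})_\infty^2(q^{38};q^{76})_\infty^4}-\frac{q^{18}}{(q^{18\pm},q^{20\pm};q^{76})_\infty(q^{37\pm};q^{76})_\infty^2(q^{38};q^{76})_\infty^4}\\
&\qquad=\frac{1}{(q^{1\pm},q^{19\pm},q^{37\pm};q^{76})_\infty^2}.
\end{align*}
The terms then match as follows:
\begin{itemize}
\item $K_1$ is the first term on the left.
\item $K_2$ is the $q^{18}$ term, with $\pm37$ (not $\pm33$) doubled.
\item $K_3$ is the theta side, with parts $\pm1,\pm19,\pm37$ in two colors.
\end{itemize}

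Parts $\equiv38\pmod{76}$ carry four colors in $K_1$ and $K_2$. This is how ``$\pm38$ with $2$ colors'' must be read, via $(q^{38\pm};q^{76})_\infty^2$. With only two colors, as in your target, the coefficient of $q^{38}$ already fails: one finds $K_1(38)-K_2(20)-K_3(38)=c-4$ when parts $\equiv 38$ have $c$ colors.

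As a check, $K_1(18)=20=K_2(0)+K_3(18)$ with $K_3(18)=19$. Parts $\pm17$ in place of $\pm19$ would instead give $K_3(18)=23$.
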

	\begin{proof}
		Thanks to \eqref{a2.1} and \eqref{a2.2}, we can rewrite \eqref{a4.9} as
		\begin{equation*}
			\frac{(q^{37\pm}; q^{76})_{\infty}}{q^{9}(q^{1\pm}; q^{76})_{\infty}}
			- q^{9} \frac{(q^{1\pm}; q^{76})_{\infty}}{(q^{37\pm}; q^{76})_{\infty}}
			- \frac{(q^{18\pm}; q^{38})_{\infty}(q^{38\pm}; q^{38})_{\infty}^6}
			{q^{9}(q^{1\pm}; q^{38})_{\infty}(q^{19}; q^{19})_{\infty}^2 (q^{76}; q^{76})_{\infty}^4}
			= 0,
		\end{equation*}
		which can be further simplified to obtain
		\begin{equation*}
			\frac{(q^{37\pm}; q^{76})_{\infty}}{(q^{1\pm}; q^{76})_{\infty}}
			- q^{18} \frac{(q^{1\pm}; q^{76})_{\infty}}{(q^{37\pm}; q^{76})_{\infty}}
			- \frac{(q^{18\pm}, q^{20\pm}; q^{76})_{\infty}(q^{38\pm}; q^{76})_{\infty}^2}
			{(q^{1\pm}, q^{37\pm}; q^{76})_{\infty}(q^{19\pm}; q^{76})_{\infty}^2}
			= 0.
		\end{equation*}
		We now divide the above identity with $(q^{1\pm}, q^{18\pm}, q^{20\pm}, q^{37\pm}; q^{76})_{\infty}
	(q^{38\pm}; q^{76})_{\infty}^2$, to obtain
		\begin{align*}
				&\frac{1}{(q^{18\pm}, q^{20\pm}; q^{76})_{\infty}(q^{1\pm}, q^{38\pm}; q^{76})_{\infty}^2}
				- q^{18} \frac{1}{(q^{18\pm}, q^{20\pm}; q^{76})_{\infty}(q^{37\pm}, q^{38\pm}; q^{76})_{\infty}^2} \\
				&= \frac{1}{(q^{1\pm}, q^{19\pm}, q^{37\pm}; q^{76})_{\infty}^2}.
			\end{align*}
Which is equivalent to
		\begin{equation*}
			\sum_{n=0}^{\infty} K_1(n) q^n
			- q^{18} \sum_{n=0}^{\infty} K_2(n) q^n
			- \sum_{n=0}^{\infty} K_3(n) q^n
			= 0.
		\end{equation*}
		where $K_1(0) = K_2(0) = K_3(0) = 1$.
		Comparing the coefficients of $q^n$ on both sides of the above equation, we complete the proof of Theorem \ref{at5.1}.
	\end{proof}
	\begin{remark}
		For example, by enumerating the relevant partitions of $n = 18$, one can verify that 
		\begin{align*}
			K_1(18) = 20, \quad K_2(0) = 1, \quad K_3(18) = 19,
		\end{align*}
		which satisfy Theorem \ref{at5.1}.
	\end{remark}
	\begin{theorem}\label{at5.2}
		For any integer $n \geq 18$, let $F_1(n)$ denote the number of partitions of $n$ into parts 
		$\equiv \pm 1, \pm 19, \pm 36$ or $\pm 38 \pmod{76}$ such that the parts $\equiv \pm 1$ and $\pm 19 \pmod{76}$ have $2$ colors. 
		Let $F_2(n)$ denote the number of partitions of $n$ into parts 
		$\equiv \pm 19, \pm 36, \pm 37$ or $\pm 38 \pmod{76}$ such that parts $\equiv \pm 19$ and $\pm 37 \pmod{76}$ have $2$ colors. 
		Let $F_3(n)$ denote the number of partitions of $n$ into parts 
		$\equiv \pm 1, \pm 18, \pm 20$ and $\pm 37 \pmod{76}$ such that the parts $\equiv \pm 1$ and $\pm 33 \pmod{68}$ have $2$ colors. 
		Then
		\begin{align*}
			F_1(n) + F_2(n-18) = F_3(n).
		\end{align*}
	\end{theorem}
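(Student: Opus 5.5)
The plan is to mirror the proof of Theorem \ref{at5.1}, but to start from the ``$+$'' case of \eqref{a4.9}:
\begin{equation*}
\frac{1}{Y_9(q)} + Y_9(q) = \frac{\phi(-q^{19})\, f(q^{18}, q^{20})}{q^{9}\psi(q^{38})\, f(-q,-q^{37})},
\qquad
Y_9(q)=q^{9}\frac{(q^{1\pm};q^{76})_\infty}{(q^{37\pm};q^{76})_\infty}.
\end{equation*}
The last expression for $Y_9(q)$ is \eqref{a1.3} with $k=76$ and $i=5$. Multiplying through by $q^9$, the left side becomes
\begin{equation*}
\frac{(q^{37\pm};q^{76})_\infty}{(q^{1\pm};q^{76})_\infty}+q^{18}\frac{(q^{1\pm};q^{76})_\infty}{(q^{37\pm};q^{76})_\infty}.
\end{equation*}
This already displays the shift by $18$ that appears in $F_2(n-18)$.

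Next, I will convert the right side into a single product modulo $76$ using the Jacobi triple product \eqref{a2} together with \eqref{a2.1}--\eqref{a2.2}. The conversions I intend to use are:
\begin{equation*}
\phi(-q^{19})=\frac{(q^{19};q^{19})_\infty^2}{(q^{38};q^{38})_\infty}=(q^{19\pm};q^{76})_\infty^2(q^{38};q^{38})_\infty,
\qquad
\psi(q^{38})=\frac{(q^{76};q^{76})_\infty^2}{(q^{38};q^{38})_\infty},
\end{equation*}
\begin{equation*}
f(q^{18},q^{20})=\frac{(q^{36\pm};q^{76})_\infty}{(q^{18\pm},q^{20\pm};q^{76})_\infty}(q^{38};q^{38})_\infty,
\qquad
f(-q,-q^{37})=(q^{1\pm},q^{37\pm};q^{76})_\infty(q^{38};q^{38})_\infty.
\end{equation*}
The leftover factor $(q^{38};q^{38})_\infty^2/(q^{76};q^{76})_\infty^2$ equals $(q^{38};q^{76})_\infty^2$, which is what the paper writes as $(q^{38\pm};q^{76})_\infty$. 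The right side therefore becomes
\begin{equation*}
\frac{(q^{19\pm};q^{76})_\infty^2\,(q^{36\pm};q^{76})_\infty\,(q^{38\pm};q^{76})_\infty}{(q^{1\pm},q^{18\pm},q^{20\pm},q^{37\pm};q^{76})_\infty}.
\end{equation*}

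I will then divide the whole identity by $(q^{1\pm},q^{19\pm},q^{37\pm};q^{76})_\infty^2(q^{36\pm},q^{38\pm};q^{76})_\infty$. This normalisation makes every term a reciprocal product, as in the proof of Theorem \ref{at5.1}. Concretely:
\begin{itemize}
\item the first term becomes $1/\bigl[(q^{1\pm},q^{19\pm};q^{76})^2_\infty(q^{36\pm},q^{38\pm};q^{76})_\infty\bigr]$, which is the generating function of $F_1(n)$;
\item the $q^{18}$ term becomes $q^{18}/\bigl[(q^{19\pm},q^{37\pm};q^{76})^2_\infty(q^{36\pm},q^{38\pm};q^{76})_\infty\bigr]$, which is $q^{18}\sum F_2(n)q^n$;
\item the right side becomes $1/\bigl[(q^{1\pm},q^{37\pm};q^{76})_\infty^{2}(q^{18\pm},q^{20\pm};q^{76})_\infty\bigr]$, which is the generating function of $F_3(n)$.
\end{itemize}
Comparing coefficients of $q^n$ then gives $F_1(n)+F_2(n-18)=F_3(n)$.

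The main obstacle is bookkeeping rather than anything conceptual. The factors of $(q^{38};q^{38})_\infty$ must cancel exactly, and the colour multiplicities must come out right. In particular, for $F_3$ the computation says the $2$-coloured parts are those $\equiv\pm1,\pm37 \pmod{76}$; the printed ``$\pm33 \pmod{68}$'' looks like a typo that should be read this way. I would double-check the normalisation, and confirm that the even-order factor $(q^{38\pm};q^{76})_\infty$ enters with the stated exponent, by comparing the first few coefficients numerically, for example at $n=18$.
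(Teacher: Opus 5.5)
Your route is the paper's: take the ``$+$'' identity for $Y_9(q)$, rewrite it as products modulo $76$, normalise so each term is a reciprocal product, and compare coefficients. Your conversions for $\phi(-q^{19})$, $\psi(q^{38})$, $f(q^{18},q^{20})$ and $f(-q,-q^{37})$ are correct. Your final right-hand side $1/\bigl[(q^{1\pm},q^{37\pm};q^{76})_\infty^2(q^{18\pm},q^{20\pm};q^{76})_\infty\bigr]$ is also the right one. It corrects the paper's printed $(q^{1\pm},q^{20\pm};q^{76})_\infty^2$ and agrees with your reading of the ``$\pm 33 \pmod{68}$'' typo. Your starting identity with $\phi(-q^{19})f(q^{18},q^{20})$ is correct, but cite it as \eqref{a1.7} with $k=76$, $i=5$. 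A literal reading of the ``$+$'' case of \eqref{a4.9} gives $\phi(q^{19})f(q^{18},q^{20})$, which is a misprint.

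The normalisation step does not work as written. At that point your identity is
\[
\frac{(q^{37\pm};q^{76})_\infty}{(q^{1\pm};q^{76})_\infty}+q^{18}\frac{(q^{1\pm};q^{76})_\infty}{(q^{37\pm};q^{76})_\infty}=\frac{(q^{19\pm};q^{76})_\infty^2(q^{36\pm},q^{38\pm};q^{76})_\infty}{(q^{1\pm},q^{18\pm},q^{20\pm},q^{37\pm};q^{76})_\infty},
\]
and its terms already carry $(q^{1\pm},q^{37\pm};q^{76})_\infty$ in the denominators. Dividing by your $(q^{1\pm},q^{19\pm},q^{37\pm};q^{76})_\infty^2(q^{36\pm},q^{38\pm};q^{76})_\infty$ therefore leaves an extra factor $1/(q^{1\pm},q^{37\pm};q^{76})_\infty$ on every term. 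For example, the first term becomes $1/\bigl[(q^{1\pm};q^{76})_\infty^3(q^{19\pm};q^{76})_\infty^2(q^{36\pm},q^{37\pm},q^{38\pm};q^{76})_\infty\bigr]$, which is not the generating function of $F_1(n)$.

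The correct divisor is $(q^{1\pm},q^{36\pm},q^{37\pm},q^{38\pm};q^{76})_\infty(q^{19\pm};q^{76})_\infty^2$. Equivalently, first multiply through by $(q^{1\pm},q^{37\pm};q^{76})_\infty$ and then use your divisor. With that fix, all three terms are exactly the products in your bullets, and the coefficient comparison goes through.
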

	\begin{proof}
		Proceeding as in the proof of Theorem \ref{at5.1}, identity \eqref{a4.9} can be expressed as
			\begin{align*}
				&\frac{1}{(q^{36\pm}, q^{38\pm}; q^{76})_{\infty}(q^{1\pm}, q^{19\pm}; q^{76})_{\infty}^2}
				+ q^{18} \frac{1}{(q^{36\pm}, q^{38\pm}; q^{76})_{\infty}(q^{19\pm}, q^{37\pm}; q^{76})_{\infty}^2} \\
				&= \frac{1}{(q^{18\pm}, q^{20\pm}; q^{
				76})_{\infty}(q^{1\pm}, q^{20\pm}; q^{76})_{\infty}^2}.
		\end{align*}
	Noting the generating functions, we rewrite the above identity as
		\begin{equation*}
			\sum_{n=0}^{\infty} F_1(n) q^n
			+ q^{18} \sum_{n=0}^{\infty} F_2(n) q^n
			= \sum_{n=0}^{\infty} F_3(n) q^n,
		\end{equation*}
	where $F_1(0) = F_2(0) = F_3(0) = 1$. Comparing the coefficients of $q^n$ on both sides of the above equation, we complete the proof of Theorem \ref{at5.2}.
	\end{proof}
	\begin{remark}
		For example, by enumerating the relevant partitions of $n = 18$, one can verify that
		\begin{align*}
			F_1(18) = 19, \quad F_2(0) = 1, \quad F_3(18) = 20,
		\end{align*}
		which satisfy Theorem \ref{at5.2}.
	\end{remark}
	\section{Conclusion}
	Vanishing coefficients in arithmetic progression of several $q$-series expansions has been extensively investigated in recent years (see \cite{Andrews, Baruah, Hirschhorn, Laughlin, Laughlin.1, Thejitha}). In the following table, we present the vanishing coefficients in the $q$-series expansions connected with the continued fractions ${Y_n(q)}$, where $n=1,2,3,4,5,6,7,8$ and $9$.
	\[
	\renewcommand{\arraystretch}{2}
	\begin{array}{|l|c|}
		\hline
		\text{$q$-series/continued fractions} & \text{Vanishing coefficients} \\ 
		\hline
		{Y_1^*(q)} = q^{-1} Y_1(q)
		= \frac{(q^{17}, q^{59}; q^{76})_\infty}{(q^{21}, q^{55}; q^{76})_\infty}
		= \sum_{n=0}^{\infty} \alpha_n q^n,
		& \alpha_{38n+35} = 0 \\ 
		\hline
		\frac{1}{{Y}_1^*(q)} = q^{-1} Y_1(q)
		= \frac{(q^{17}, q^{59}; q^{76})_\infty}{(q^{21}, q^{55}; q^{76})_\infty}
		= \sum_{n=0}^{\infty} \alpha'_n q^n,
		& \alpha'_{38n+37} = 0 \\ 
		\hline
		{Y_2^*(q)} = q^{-2} Y_2(q)
		= \frac{(q^{15}, q^{61}; q^{76})_\infty}{(q^{23}, q^{53}; q^{76})_\infty}
		= \sum_{n=0}^{\infty} \beta_n q^n,
		& \beta_{38n+28} = 0 \\ 
		\hline
		\frac{1}{{Y}_2^*(q)} = q^{-2} Y_2(q)
		= \frac{(q^{15}, q^{61}; q^{76})_\infty}{(q^{23}, q^{53}; q^{76})_\infty}
		= \sum_{n=0}^{\infty} \beta'_n q^n,
		& \beta'_{38n+32} = 0 \\ 
		\hline
		{Y_3^*(q)} = q^{-3} Y_3(q)
		= \frac{(q^{13}, q^{63}; q^{76})_\infty}{(q^{25}, q^{51}; q^{76})_\infty}
		= \sum_{n=0}^{\infty} \gamma_n q^n,
		& \gamma_{38n+17} = 0 \\ 
		\hline
		\frac{1}{{Y}_3^*(q)} = q^{-3} Y_3(q)
		= \frac{(q^{13}, q^{63}; q^{76})_\infty}{(q^{25}, q^{51}; q^{76})_\infty}
		= \sum_{n=0}^{\infty} \gamma'_n q^n,
		& \gamma'_{38n+23} = 0 \\ 
		\hline
		{Y_4^*(q)} = q^{-4} Y_4(q)
		= \frac{(q^{11}, q^{65}; q^{76})_\infty}{(q^{27}, q^{49}; q^{76})_\infty}
		= \sum_{n=0}^{\infty} \delta_n q^n,
		& \delta_{38n+2} = 0 \\ 
		\hline
		\frac{1}{{Y}_4^*(q)} = q^{-4} Y_4(q)
		= \frac{(q^{11}, q^{65}; q^{76})_\infty}{(q^{27}, q^{49}; q^{76})_\infty}
		= \sum_{n=0}^{\infty} \delta'_n q^n,
		& \delta'_{38n+10} = 0 \\ 
		\hline
		{Y_5^*(q)} = q^{-5} Y_5(q)
		= \frac{(q^{9}, q^{67}; q^{76})_\infty}{(q^{29}, q^{47}; q^{76})_\infty}
		= \sum_{n=0}^{\infty} \zeta_n q^n,
		& \zeta_{38n+21} = 0 \\ 
		\hline
		\frac{1}{{Y}_5^*(q)} = q^{-5} Y_5(q)
		= \frac{(q^{9}, q^{67}; q^{76})_\infty}{(q^{29}, q^{47}; q^{76})_\infty}
		= \sum_{n=0}^{\infty} \zeta'_n q^n,
		& \zeta'_{38n+33} = 0 \\ 
		\hline
		{Y_6^*(q)} = q^{-6} Y_6(q)
		= \frac{(q^{7}, q^{69}; q^{76})_\infty}{(q^{31}, q^{45}; q^{76})_\infty}
		= \sum_{n=0}^{\infty} \eta_n q^n,
		& \eta_{38n+36} = 0 \\ 
		\hline
		{Y_7^*(q)} = q^{-7} Y_7(q)
		= \frac{(q^{5}, q^{71}; q^{76})_\infty}{(q^{33}, q^{43}; q^{76})_\infty}
		= \sum_{n=0}^{\infty} \xi_n q^n,
		& \xi_{38n+9} = 0 \\ 
		\hline
		{Y_8^*(q)} = q^{-8} Y_8(q)
		= \frac{(q^{3}, q^{73}; q^{76})_\infty}{(q^{35}, q^{41}; q^{76})_\infty}
		= \sum_{n=0}^{\infty} \lambda_n q^n,
		& \lambda_{38n+16} = 0 \\ 
		\hline
		{Y_9^*(q)} = q^{-9} Y_9(q)
		= \frac{(q, q^{75}; q^{76})_\infty}{(q^{37}, q^{39}; q^{76})_\infty}
		= \sum_{n=0}^{\infty} \sigma_n q^n,
		& \sigma_{38n+19} = 0 \\ 
		\hline
		\hline
	\end{array}
	\]	
	The vanishing coefficient results mentioned in the above table can be derived following a proof similar to that of Theorems 3.1 and 3.2 in \cite{D.S}. We encourage interested readers to further investigate vanishing coefficients for continued fractions of other even order.

	\bigskip
	\bigskip
	
	\noindent
	Department of Mathematics\\
	Ramanujan School of Mathematical Sciences\\
	Pondicherry University\\
	Puducherry- 605 014, India.\\

	\noindent Email: \texttt{iamdipikasarkar@pondiuni.ac.in}\\
	\noindent Email: \texttt{dr.fathima.sn@pondiuni.ac.in} (\Letter)\\
	\noindent Email: \texttt{tthejithamp@pondiuni.ac.in}
	

\begin{thebibliography}{9}
		\bibitem{Andrews}
		G. E. Andrews, D. Bressoud,
		\textit{Vanishing coefficients in infnite product expansion},
		J. Aust.Math. Soc. Ser. 27 (1979) 199-202.
		\bibitem{Baruah}
		N. D. Baruah and M. Kaur,
		\textit{Some results on vanishing coefficients in infinite product expansions},
		Ramanujan J. 53 (2020), 551–568.
		
		\bibitem{Berndt}
		B. C. Berndt,
		\textit{Ramanujan's Notebooks, Part III},
		Springer-Verlag, New York, 1991.
		\bibitem{Hirschhorn}
		M. D. Hirschhorn, 
		\textit{Two remarkable q-series expansions},
		Ramanujan J. 49(2) (2018) 451-463.
		
		\bibitem{Laughlin}
		M. C. Laughlin,
		\textit{Further results on vanishing coefficients in infinite product expansions},
		J. Aust. Math. Soc. Ser. A {98} (2015), 69-77.
			\bibitem{Laughlin.1}
		J. M. Laughlin,
		\textit{Some observations on Lambert series, vanishing coefficients and dissections of infinite products and series}, https://doi.org/10.48550/arXiv.1906.11978.
		
		
		\bibitem{Saikia.1}
		S. Rajkhowa and N. Saikia,
		\textit{Some Identities of Ramanujan's $q$-continued fraction of order fourteen and twenty-eight, and vanishing coefficients},
		Functiones et Approximatio,Commentarii Mathematici, vol. 70, no. 2, 2024.
		\bibitem{Saikia.4}
		S. Rajkhowa and N. Saikia,
		\textit{Theta-function identities, explicit values for Ramanujan's continued fractions of order sixteen and applications to partition theory.},
		ntegers: Electronic Journal of Combinatorial Number Theory, 23, A59 (2023).
		\bibitem{Saikia.2}
		S. Rajkhowa and N. Saikia,
		\textit{S. Rajkhowa and N. Saikia, Some results on Ramanujan’s continued fractions of order ten and applications},
		Indian J. Pure Appl. Math. 56(1) (2025),13–37.
		
		\bibitem{Saikia.3}
		S. Rajkhowa and N. Saikia,
		\textit{Theta function identities and vanishing coefficient results for continued fractions of order sixty-four},
		Indian Journal of Pure and Applied Mathematics, Springer, vol. 57(2), pages 585-596, April (2026).
		\bibitem{Raksha}
		Raksha and B. R. Srivatsa Kumar,
		\textit{Some identities of Ramanujan's q-Continued Fraction of Order Eighteen, Twenty-Six and Thirty, and Vanishing Coefficients},
		https://doi.org/10.48550/arXiv.2311.06298
		\bibitem{R.Notebooks (2 volumes)}
		S. Ramanujan,
		\textit{Notebooks (2 volumes)},
		Tata Institute of Fundamental Research, Bombay, 1957.
		\bibitem{D.S}
		D. Sarkar and S. N. Fathima,
		\textit{On Ramanujan's $q$-Continued Fractions of Order Thirty-Four and Sixty-Eight},
		https://doi.org/10.48550/arXiv.2605.28841
		\bibitem{Thejitha}
		Thejitha, M. P., Anand, A., and Fathima, S. N. (2025). \textit{Vanishing Coefficients of $q^{5n+ r}$ and $q^{7n+ r}$ in Certain Infinite q-series Expansions.} arXiv preprint arXiv:2510.05929.
	\end{thebibliography}
\end{document}